\newtheorem{theorem}{Theorem}[section]
\newtheorem{lemma}[theorem]{Lemma}
\newtheorem{algorithm}[theorem]{Algorithm}
\newtheorem{corollary}[theorem]{Corollary}
\theoremstyle{definition}
\newtheorem{definition}[theorem]{Definition}
\theoremstyle{remark}
\numberwithin{equation}{section}
\begin{document}

\title{Realization of Modular Galois Representations in Jacobians of modular curves}

\author{Peng Tian}
\address{Department of Mathematics, East China University of Science and Technology, Shanghai, China 200231}
\email{tianpeng@ecust.edu.cn}
\thanks{The author is supported by NSFC Grant \#11601153.}


\subjclass[2010]{Primary 11Fxx, 11G10; Secondary 11Y40, 11G30}



\keywords{modular forms, modular Galois representations, Teichm\"uller lifting, Jacobians of Modular Curves}

\begin{abstract}
In this paper, we propose an improved algorithm for computing mod $\ell$ Galois representations associated to eigenforms of arbitrary levels prime to $\ell$. Precisely, we present a method to find the Jacobians of modular curves which have the smallest possible dimensions in a well-defined sense to realize the modular Galois representations. This algorithm also works without the assumption $\ell \ge k-1$, where $k$ are the weights of the eigenforms.
\end{abstract}

\maketitle



\section{Introduction} \label{sec:introduction}

In the book \cite{book}, S. Edixhoven and J.-M. Couveignes propose a polynomial time algorithm to compute the mod $\ell$ Galois representations $\rho_{f,\ell}$ associated to level one eigenforms. P. Bruin \cite{bruin} generalizes the algorithm  and applies on eigenforms of arbitrary levels.

Let  $f\in S_{k}(\varGamma_{1}(N), \varepsilon) $ be an eigenform and $\ell$ be a prime with $\ell \ge k-1$.  Let $N'=N\ell$ if $k>2$ and $N'=N$ if $k=2$.

Let $J_{1}$ denote the Jacobian of the modular curve $X_{1}(N')$  associated to $\varGamma_{1}(N')$. Let $\mathbb{T}\subseteq$ End$J_{1}$ be the Hecke algebra associated to $S_{2}(\varGamma_{1}（(N'))$ and $\mathfrak{m}$ be the maximal ideal associated to $f$. Then it is well known that the $(\mathbb{T}/\mathfrak{m})[Gal(\overline{\mathbb{Q}}|\mathbb{Q})]$-module $J_{1}(\overline{\mathbb{Q}})[\mathfrak{m}]$ is a non-zero finite direct sum of copies of  $\rho_{f,\ell}$. The computations of $\rho_{f,\ell}$ boil down to producing 
the representation
$$
\rho_{J_{1}(\overline{\mathbb{Q}})[\mathfrak{m}]}: Gal (\mathbb{\overline{Q}}/\mathbb{Q}) \rightarrow \mathrm{Aut}_{\mathbb{T}/\mathfrak{m}}(J_{1}(\overline{\mathbb{Q}})[\mathfrak{m}]).
$$

S. Edixhoven and J.-M. Couveignes \cite{book} propose a method to efficiently compute $\rho_{J_{1}(\overline{\mathbb{Q}})[\mathfrak{m}]}$. They prove that  $\rho_{f}$ can be described by a certain polynomial $P_{f}\in \mathbb{Q}[x]$ whose splitting field is the fixed field $L$ of ker($\rho_{f}$). The polynomial can be computed by approximately evaluating the points of $J_{1}(\overline{\mathbb{Q}})[\mathfrak{m}]$.

However, in practice, the most time-consuming part of the algorithm is to evaluate $J_{1}(\overline{\mathbb{Q}})[\mathfrak{m}]$ and it heavily depends on the dimension of $J_1$. In the paper \cite{tian}, the author presents an improvement of this algorithm in the cases that $\ell \ge k-1$ and  $f$ has level one. In these cases,  one can do the computations  with the Jacobian $J_{\varGamma_{H}}$ of $X_{\varGamma_{H}}$ rather than  $J_1$, where $X_{\varGamma_{H}}$ is a modular curve of smaller genus with $\varGamma_{1}(\ell)\lneq \varGamma_{H} \leq \varGamma_{0}(\ell)$. The explicit computations of evaluating $J_{1}(\overline{\mathbb{Q}})[\mathfrak{m}]$ can be greatly reduced by this improved algorithm.

In this paper, we generalize the improved algorithm of \cite{tian} to the cases that $\ell\ge5$ may be any prime without the assumption $\ell \ge k-1$ and the eigenform $f$ has arbitrary level prime to $\ell$.

We firstly propose an algorithm, for a  normalized eigenform $f\in S_{k}(\varGamma_{1}(N))$, to find an integer $i$, a congruence subgroup $\varGamma_{H}$ and a normalized eigenform $f_2\in S_2(\varGamma_{H})$, such that $\rho_{f,\ell}$ is isomorphic to $\rho_{f_2,\ell}\otimes \chi^{i}_{\ell}$. We also show that the subgroup $\varGamma_{H}$ produced by this algorithm is the largest possible congruence subgroup with $\varGamma_{1}(N')\subseteq\varGamma_{H}\subseteq\varGamma_{0}(N')$, on which such eigenform $f_2$ exists.

Let $J_{\varGamma_{H}}$ be the Jacobian of the modular curve $X_{\varGamma_{H}}$  associated to $\varGamma_{H}$. We then demonstrate that $J_{1}(\overline{\mathbb{Q}})[\mathfrak{m}]$ is a $2$-dimensional subspace of $J_{\varGamma_{H}}[\ell]$ and the representation $\rho_{J_{1}(\overline{\mathbb{Q}})[\mathfrak{m}]}$ is a subrepresentation of $J_{\varGamma_{H}}[\ell]$. 
This allows us to evaluate the points of $J_{1}(\overline{\mathbb{Q}})[\mathfrak{m}]$  by working with the Jacobian $J_{\varGamma_{H}}$, which has the smallest possible dimension in the sense that $\varGamma_{H}$ is the largest possible congruence subgroup.

As examples, we do explicit computations to calculate the eigenforms $f_2$ and list the dimensions of  $J_{1}(N')$ and  $J_{\varGamma_{H}}$  in the cases with $\ell$ up to $13$ and $N$ up to $6$.

In the end, we discuss the case that $k>2$ and $f\in S_{k}(\varGamma_{0}(N))$ is an eigenform on $\varGamma_{0}(N)$. To be precise, we prove  that the index $[\varGamma_{H}:\varGamma_1(N\ell)]$ of $\varGamma_{1}(N\ell)$  in $\varGamma_H$ is equal to $\frac{\phi(N\ell)\cdot \mathrm{gcd}(\ell-1,k-2-2i)}{\ell-1}$. Then, we apply this result to give the criteria for the occurrence of $\varGamma_H=\varGamma_{0}(N\ell)$. As a consequence of the criteria, it can be shown that, for a normalized eigenform $f\in S_{\ell+1}(\varGamma_{1}(N))$, the existence of  a normalized eigenform $f_2\in S_2(\varGamma_{0}(N\ell))$  with $\rho_{f,\ell}\cong\rho_{f_2,\ell}$ is equivalent to  $f\in S_{\ell+1}(\varGamma_{0}(N))$. 

The rest of this paper is organized as follows. In Section \ref{sec:preliminary}, we recall the computations of modular Galois representations. In Section \ref{sec:character}, we define Teichm\"uller lifting of Dirichlet characters and give the results that play important roles in the next section. Our main results and algorithms are presented in Section \ref{sec:algorithm}. In Section \ref{sec:gamma0}, we apply our main results to the case of eigenforms on $\varGamma_{0}(N)$.

\section{Computations of modular Galois representations} \label{sec:preliminary}
We let $\ell$ denote a prime with $\ell\ge5$ and $v$ be a place dividing $\ell$ of the field of algebraic numbers $\overline{\mathbb{Q}}$. The residue field of $v$ is denoted by $\overline{\mathbb{F}}_\ell$ and it is then the algebraic closure of the prime field $\mathbb{F}_\ell$.

For any positive integer $n$, the congruence subgroups $\varGamma_{0}(n)$ and $\varGamma_{1}(n)$  respectively are
$$\varGamma_{0}(n)=
\left\lbrace 
\begin{pmatrix}
a & b\\
c & d
\end{pmatrix}  \in SL(2,\mathbb{Z}) \ \mid \   c\equiv 0 \ (\mathrm{mod} \ n) 
\right\rbrace \ \ \ \mathrm{and}$$
$$\varGamma_{1}(n)=
\left\lbrace 
\begin{pmatrix}
a & b\\
c & d
\end{pmatrix}  \in SL(2,\mathbb{Z}) \ \mid \   c\equiv 0 \ (\mathrm{mod} \ n) \ \ \mathrm{and} \  \ d\equiv 1 \ (\mathrm{mod} \ n) 
\right\rbrace.$$

Now let $N>0$ and $k\ge2$ be  integers. Let $q=q(z)=e^{2\pi iz}$ and  $f(z) = \sum_{n>0} a_{n} (f) q^{n} \in S_{k}(\varGamma_{1}(N), \varepsilon) $ be a normalized eigenform of weight $k$ and level $N$, with nebentypus character $\varepsilon $. Let $K_f$ be the number field of $f$, which is obtained by adjoining all the Fourier coefficients $ a_{n}(f) $ of the $q$-expansion of $f$ to $\mathbb Q$. Let $\lambda$ be a prime of $K_{f}$ lying over $\ell$.  Then  P. Deligne \cite{deligne71} proves  the following well known theorem:

\begin{theorem}  \label{thm:deligne}
	There exists a unique (up to isomorphism) continuous
	semi-simple  representation
	\begin{center}
		$\rho_{f,\lambda}: Gal(\overline{\mathbb{Q}}|\mathbb{Q}) \rightarrow GL_{2}(\overline{\mathbb{F}}_{\ell})$.
	\end{center}
	which is unramified outside $N\ell$ and such that for all primes $p\nmid N\ell$ the characteristic polynomial of  $\rho_{f,\lambda} (Frob_{p}) $ satisfies
	\begin{equation} \label{charpol}
	charpol( \rho_{f,\lambda} (Frob_{p}))\equiv x^2-a_{p}(f)x+ \varepsilon(p)p^{k-1} \mod{\lambda}.
	\end{equation}
\end{theorem}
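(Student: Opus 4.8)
The plan is to first produce a continuous $\lambda$-adic representation $\rho_{f,\lambda}^{\circ}\colon Gal(\overline{\mathbb{Q}}|\mathbb{Q}) \to GL_{2}(K_{f,\lambda})$, where $K_{f,\lambda}$ denotes the completion of $K_{f}$ at $\lambda$, and then to obtain $\rho_{f,\lambda}$ from it by reduction modulo $\lambda$ followed by semi-simplification. In weight $k=2$ I would work with the Jacobian $J_{1}(N)$ of $X_{1}(N)$: its Tate module $T_{\ell}J_{1}(N)\otimes_{\mathbb{Z}_{\ell}}\mathbb{Q}_{\ell}$ carries commuting actions of $Gal(\overline{\mathbb{Q}}|\mathbb{Q})$ and of the Hecke algebra, and passing to the quotient cut out by the system of eigenvalues $T_{n}\mapsto a_{n}(f)$ and $\langle d\rangle \mapsto \varepsilon(d)$ yields a free $K_{f,\lambda}$-module of rank $2$ on which Galois acts. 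For arbitrary $k\ge 2$ I would instead use the \'etale cohomology group $H^{k-1}_{\mathrm{et}}$ of a smooth compactification of the $(k-2)$-fold fibre product of the universal elliptic curve over $X_{1}(N)$ --- a Kuga--Sato variety --- again extracting the $f$-isotypic piece by means of the Hecke correspondences and the diamond operators acting through $\varepsilon$; Deligne's construction \cite{deligne71} shows this piece is two-dimensional over $K_{f,\lambda}$.

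The next step is to compute the local behaviour at a prime $p\nmid N\ell$. Such a variety has good reduction at $p$, so $\rho_{f,\lambda}^{\circ}$ is unramified there, and the Eichler--Shimura congruence relation --- which on the special fibre identifies the correspondence $T_{p}$ with $Frob_{p}+\langle p\rangle p^{k-1}Frob_{p}^{-1}$ --- translates, on the two-dimensional $f$-isotypic quotient, into the identity $charpol(\rho_{f,\lambda}^{\circ}(Frob_{p}))=x^{2}-a_{p}(f)x+\varepsilon(p)p^{k-1}$. Equivalently, $\det\rho_{f,\lambda}^{\circ}=\varepsilon\cdot\chi_{\ell}^{k-1}$, where $\chi_{\ell}$ is the $\ell$-adic cyclotomic character, which is itself unramified outside $N\ell$ with $\chi_{\ell}(Frob_{p})=p$. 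To descend to characteristic $\ell$, I would choose a $Gal(\overline{\mathbb{Q}}|\mathbb{Q})$-stable lattice over the ring of integers of $K_{f,\lambda}$ --- possible because the image is compact --- reduce it modulo the maximal ideal, extend scalars to $\overline{\mathbb{F}}_{\ell}$, and take the semi-simplification; since the characteristic polynomial of $\rho_{f,\lambda}^{\circ}(Frob_{p})$ already has coefficients in the ring of integers, the congruence \eqref{charpol} is inherited by this reduction.

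For uniqueness up to isomorphism I would invoke the Brauer--Nesbitt theorem: a semi-simple finite-dimensional representation of a group over a field is determined, up to isomorphism, by the characteristic polynomials of the elements in its image. By the Chebotarev density theorem the Frobenius classes $Frob_{p}$ for $p\nmid N\ell$ are dense in the Galois group of the maximal extension of $\mathbb{Q}$ unramified outside $N\ell$, so continuity forces $charpol(\rho(\sigma))$ to be determined by \eqref{charpol} for every $\sigma$ in the image; hence any two continuous semi-simple representations that are unramified outside $N\ell$ and satisfy \eqref{charpol} are isomorphic.

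The genuinely hard part is the construction --- in particular, showing that the $f$-isotypic part of the Kuga--Sato cohomology is exactly two-dimensional over $K_{f,\lambda}$, and establishing the Eichler--Shimura congruence relation in higher weight. These rest on Deligne's comparison between the \'etale, Betti and de Rham realizations attached to modular forms and ultimately on his proof of the Weil conjectures. By contrast, the reduction-mod-$\lambda$ step and the Chebotarev/Brauer--Nesbitt argument for uniqueness are essentially formal.
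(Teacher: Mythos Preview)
Your outline is a correct sketch of the standard construction and uniqueness argument for Deligne's representation. However, the paper does not prove this statement at all: it is quoted as a well-known result and attributed to Deligne \cite{deligne71}, with no argument given. So there is nothing to compare your approach against; your proposal simply supplies what the paper deliberately omits.
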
 

We also let $\rho_{f,\ell}$ denote the representation $\rho_{f,\lambda}$ when the prime $\lambda$ is not involved in our discussion.

When we say computing a Galois representation $\rho_{f,\ell}$, it means to give:
\begin{enumerate}
	\item the finite Galois extension $L$ over $\mathbb{Q}$, such that  $\rho_{f,\ell}$ factors as
	\begin{displaymath} 
	\xymatrix{Gal (\mathbb{\overline{Q}}|\mathbb{Q}) \ar[rr]^{\rho_{f,\ell}}\ar[dr]_{\pi}  &   & GL_{2}(\overline{\mathbb{F}}_{\ell})  \\
		&  Gal ( L|\mathbb{Q}) \ar[ur]^{\varPsi} &
	}
	\end{displaymath}
	where $\pi$ is the natural surjection and $\varPsi$ is an injection.
	\item   the matrix of $\rho_{f,\ell}(\sigma)$ for each $\sigma\in  Gal ( L|\mathbb{Q})$.
\end{enumerate}

In the book \cite{book}, S. Edixhoven and J.-M. Couveignes propose a polynomial time algorithm to compute  $\rho_{f,\ell}$ for level one eigenforms. In his Ph.D thesis \cite{bruin}, P. Bruin generalizes the algorithm  and applies on eigenforms of arbitrary levels. 

Let  $f\in S_{k}(\varGamma_{1}(N), \varepsilon) $ be a normalized eigenform. Let $\ell$ be a prime number with $\ell \ge k-1$ and $v$ be a place dividing $\ell$ of the field of algebraic numbers $\overline{\mathbb{Q}}$. Let $N'=N\ell$ if $k>2$ and $N'=N$ if $k=2$.
We let $J_{1}$ be the Jacobian of the modular curve $X_{1}(N')$  associated to $\varGamma_{1}(N')$. Let $\mathbb{T}\subseteq$ End$J_{1}$ be the Hecke algebra generated by the diamond and Hecke operators over $\mathbb Z$  and let $\mathcal{I}_{f}$ be the  ring homomorphism $\mathcal{I}_{f}: \mathbb{T} \rightarrow  \overline{\mathbb{F}}_\ell,$ given by $\langle d \rangle \mapsto \varepsilon(d)$ and $T_{n} \mapsto a_{n}(f) \mod v$.
Let $\mathfrak m_{f}$ denote the kernel of $\mathcal{I}_{f}$ and if we put 
\begin{displaymath} 
V_{f}=J_{1}(\overline{\mathbb{Q}})[\mathfrak{m}_{f}]=\{x \in J_{1}(\overline{\mathbb{Q}}) \ | \ tx=0 \ \mathrm{for}  \ \mathrm{all} \ t \ \mathrm{in} \ \mathfrak m_{f} \},
\end{displaymath} 
then $V_{f}$ is a $\mathbb{T}/\mathfrak{m}$-vector subspace of the $\ell$-torsion points $J_{1}(\overline{\mathbb{Q}})[\ell]$ of $J_{1}(\overline{\mathbb{Q}})$, and moreover, $V_{f}$ is a non-zero finite direct sum of copies of  $\rho_{f,\ell}$. The number of the copies of  $\rho_{f,\ell}$ is called \textit{multiplicity} of $\rho_{f,\ell}$ and  the representation  $\rho_{f,\ell}$ is called a \textit{multiplicity one representation} if  its multiplicity is equal to one. 

Namely, for a multiplicity one representation  $\rho_{f,\ell}$, the vector space $V_{f}$ has dimension 2 and $\rho_{f,\ell}$ is isomorphic to
the representation
$$
\rho_{V_f}: Gal (\mathbb{\overline{Q}}/\mathbb{Q}) \rightarrow \mathrm{Aut}_{\mathbb{T}/\mathfrak{m}_{f}}(V_f).
$$
If the multiplicity is larger than one, then $\rho_{f,\ell}$ is isomorphic to any simple constituent of $\rho_{V_f}$.

 Following the work of Mazur, Ribet, Gross, Edixhoven and Buzzard, we know that  in most cases the multiplicity is equal to one. More precisely, an irreducible modular Galois representation $\rho_{f,\ell}$ is a multiplicity one  representation, except the case that all the following hypothesis are simultaneously satisfied:
\begin{enumerate}
	\item $k=\ell$;
	\item $\rho_{f,\ell}$ is unramified at $\ell$;
	\item $\rho_{f,\ell} (Frob_{\ell})$ is a scalar matrix.
\end{enumerate}
For details, we refer to \cite[Section 3.2 and 3.3]{ribetstein}.

 Thus, to compute modular Galois representation $\rho_{f,\ell}$, it suffices to compute the representation $\rho_{V_f}$ (in the very few cases that the multiplicities are larger than one, it is in fact to compute any simple constituent of $\rho_{V_f}$).
 
 The method provided by Edixhoven and Couveignes to compute $\rho_{V_f}$ is to evaluate a suitable  polynomial $P_{V_f}\in \mathbb Q[X]$ whose splitting field is the fixed field of $\rho_{V_f}$. More precisely,  we can take the polynomial to be
\begin{equation}\label{polynomial}
P_{V_f}(x) = \prod_{Q\in V_f-\{0\}}(x-\sum_{i=1}^g h(Q_i)),
\end{equation}
for some suitable function $h(x)$ in the function field of $X_1(N')$, where  $g$ is the genus of $X_1(N')$ and $Q_i$ are the points on $X_1(N')$ such that $Q=\sum_{i=1}^g (Q_i) -g\cdot(O)$ as divisors on $X_1(N')$ via the Abel-Jacobi map.

In \cite{book}, the authors propose two methods to efficiently evaluate the points $Q\in V_f-\{0\}$, either over complex numbers or modulo sufficiently many small prime numbers to reconstruct $V_f$.
 In each of the methods, however, it requires high precisions to approximate the points of $V_f$. 
Consequently, it always takes quite much time to obtain the polynomial $P_{V_f}$ in practice. It is known that the required precisions and calculations increase very rapidly with the growth of the dimension of the Jacobian.
Therefore, the calculations can be largely decreased if we can work with a Jacobian which has a smaller dimension.

Modular symbol is a very effective tool in the computations of modular forms. We refer to \cite{Stein07} for the theory of computing modular forms via modular symbols.

\section{Dirichlet characters and Teichm\"uller lifting}  \label{sec:character}

In this section, we first recall Dirichlet characters and then define the Teichm\"uller lifting which has an important role in proving our main results.
 
\subsection{Dirichlet characters}

Let $n$ be a positive integer. A Dirichlet character modulo $n$ is a homomorphism of multiplicative groups:
\begin{displaymath}
	\varepsilon: \ (\mathbb{Z}/n\mathbb{Z})^{\ast}\rightarrow \mathbb{C}^{\ast}.
\end{displaymath} 

For two Dirichlet characters $\varepsilon_{1}$ and $\varepsilon_{2}$ modulo $n$, the product character  $\varepsilon_{1}\varepsilon_{2}$, which is also a Dirichlet character modulo $n$, is defined by 
$$(\varepsilon_{1}\varepsilon_{2})(x)=\varepsilon_{1}(x)\varepsilon_{2}(x).$$ 

Let $d$ be a positive divisor of $n$. Let $\pi_{n,d}$ be the canonical projection
\begin{displaymath} 
	\pi_{n,d}: \ \ (\mathbb{Z}/n\mathbb{Z})^{\ast}\rightarrow (\mathbb{Z}/d\mathbb{Z})^{\ast} , \ \ \ \ \ \ \
	x \mod n  \ \rightarrow \ \ \ \  x\mod d.
\end{displaymath}

Then we have
\begin{lemma} \label{lemmaofnd}
	Let $d,n$ be two integers with $d|n$. Then the canonical homomorphism  $\pi_{n,d}$  is surjective.
\end{lemma}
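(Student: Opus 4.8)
The plan is to exhibit, for every unit $\bar a \in (\mathbb{Z}/d\mathbb{Z})^{\ast}$, an explicit preimage in $(\mathbb{Z}/n\mathbb{Z})^{\ast}$; equivalently, starting from an integer $a$ with $\gcd(a,d)=1$ representing $\bar a$, to correct it by a multiple of $d$ so that it becomes coprime to $n$ as well. I would first reduce the task to a statement about primes: any integer of the form $a + dt$ satisfies $a + dt \equiv a \pmod p$ for every prime $p \mid d$, and since $\gcd(a,d)=1$ forces $p \nmid a$, such an integer is automatically coprime to all primes dividing $d$. Hence the only primes that could obstruct coprimality to $n$ are those dividing $n$ but not $d$, and I only need to choose $t$ to take care of them.

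Next I would run a Chinese Remainder Theorem argument over exactly those primes. For a prime $p \mid n$ with $p \nmid d$, the element $d$ is invertible modulo $p$, so the congruence $a + dt \equiv 0 \pmod p$ has a single solution class for $t$ modulo $p$; as $p \ge 2$, there is at least one residue class of $t$ modulo $p$ avoiding it. Letting $e$ be the product of the distinct primes dividing $n$ but not $d$ (with $e=1$ if there are none), the CRT produces a single integer $t$ lying in a "good" class modulo each such prime simultaneously. Then $b := a + dt$ is coprime to every prime dividing $e$ and, by the previous paragraph, to every prime dividing $d$; since every prime divisor of $n$ divides $d$ or divides $e$, we get $\gcd(b,n)=1$. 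Because $b \equiv a \pmod d$, the class of $b$ in $(\mathbb{Z}/n\mathbb{Z})^{\ast}$ is mapped by $\pi_{n,d}$ to $b \bmod d = \bar a$, which proves surjectivity.

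There is essentially no deep obstacle here; the only point that must not be glossed over is that a surjective ring homomorphism need not induce a surjection on unit groups (for instance $\mathbb{Z} \to \mathbb{Z}/n\mathbb{Z}$), so the lift cannot be invoked formally and genuinely uses the divisibility $d \mid n$ through the prime-by-prime correction above. Alternatively, one may package the same idea by invoking the isomorphisms $(\mathbb{Z}/n\mathbb{Z})^{\ast} \cong \prod_{p \mid n} (\mathbb{Z}/p^{v_p(n)}\mathbb{Z})^{\ast}$ and $(\mathbb{Z}/d\mathbb{Z})^{\ast} \cong \prod_{p \mid d} (\mathbb{Z}/p^{v_p(d)}\mathbb{Z})^{\ast}$, under which $\pi_{n,d}$ becomes the product of the projections $(\mathbb{Z}/p^{v_p(n)}\mathbb{Z})^{\ast} \to (\mathbb{Z}/p^{v_p(d)}\mathbb{Z})^{\ast}$ (with trivial target when $p \nmid d$), each surjective because any unit modulo $p^{v_p(d)}$ lifts to an integer prime to $p$ and hence to a unit modulo $p^{v_p(n)}$. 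I would most likely present the first, self-contained version.
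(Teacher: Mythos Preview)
Your argument is correct. Both you and the paper lift a representative $a$ with $\gcd(a,d)=1$ to an integer $a+dt$ coprime to $n$, but the executions differ: where you invoke the Chinese Remainder Theorem to choose $t$ avoiding the bad residue class modulo each prime $p\mid n$ with $p\nmid d$, the paper simply writes down an explicit correction term, taking $x' = x + d\prod_{p\mid n,\,p\nmid x} p$ and checking directly that $\gcd(x',n)=1$. The paper's version is shorter and avoids any appeal to CRT, at the cost of a slightly ad hoc formula; your version is a bit longer but perhaps more transparent, and your second packaging via the factorisation $(\mathbb{Z}/n\mathbb{Z})^{\ast}\cong\prod_{p\mid n}(\mathbb{Z}/p^{v_p(n)}\mathbb{Z})^{\ast}$ makes the surjectivity conceptually obvious. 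Either route is perfectly adequate for this elementary lemma.
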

\begin{proof}
	For any integer $x$ with gcd$(x,d)=1$, take $x'=x+d\prod\limits_{\substack{primes \ p|n  \\ \ \ \ \ \ \ \ \  p\nmid x}}p$. Then we have gcd$(x',n)=1$ and $x'\equiv x\mod d$.
\end{proof}

Each Dirichlet character $\varepsilon$ modulo $d$ can lift to a unique Dirichlet character $\varepsilon_{ind}$ modulo $n$ such that
$$\varepsilon_{ind}=\varepsilon \circ \pi_{n,d},$$
and the character $\varepsilon_{ind}$ is said to be \textit{induced} by $\varepsilon$. Equivalently, the character $\varepsilon_{ind}$ is trivial on the normal subgroup 
$$\{x\in (\mathbb{Z}/n\mathbb{Z})^{\ast}: x\equiv 1 \mod d\}.$$
Thus we know 
\begin{equation} \label{inducedkernal}
\mathrm{ker}(\varepsilon)=\pi_{n,d}(\mathrm{ker}(\varepsilon_{ind})):=\{x \ (\mathrm{mod} \ d) \ | \  
\mathrm{gcd}(x,n)=1 \ \ \mathrm{and} \ \ \varepsilon_{ind}(x)=1 \}.
\end{equation}

The \textit{conductor} of a Dirichlet character $\varepsilon$ is defined to be the smallest positive divisor $d$ of $n$ such that $\varepsilon$  is induced by some character  modulo $d$. A Dirichlet character is trivial if and only if its conductor is $1$. 

Moreover, it can be shown that the conductor of a Dirichlet character $\varepsilon$ is the greatest common divisor of all divisors $d$ of $n$ such that $\varepsilon$ is induced by a character modulo $d$.

\subsection{Teichm\"uller lifting}

	For a positive integer $n$, we let $\zeta_{n}$ denote the primitive $n$-th root of unity. To give the definition of Teichm\"uller lifting of Dirichlet character, we need

\begin{lemma} \label{lem:distinctrootsofunity}
	Let $n$ be a positive integer. Let $\ell$ be a prime number with $\ell \nmid n$ and $v$ be a place dividing $\ell$ of $\overline{\mathbb{Q}}$. Then the n-th roots of unity are distinct modulo $v$.
\end{lemma}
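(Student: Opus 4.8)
The plan is to show that the reduction map modulo $v$ is injective on the set $\mu_n$ of $n$-th roots of unity in $\overline{\mathbb{Q}}$, under the hypothesis $\ell \nmid n$. First I would reduce to a statement about a single difference: two distinct $n$-th roots of unity $\zeta_n^a$ and $\zeta_n^b$ are congruent modulo $v$ if and only if their difference $\zeta_n^a - \zeta_n^b$ lies in the maximal ideal $\mathfrak{m}_v$ of the valuation ring $\mathcal{O}_v$ at $v$. Factoring out $\zeta_n^b$ (a unit in $\mathcal{O}_v$), this is equivalent to asking whether $\zeta_n^{a-b} - 1 \in \mathfrak{m}_v$ for some $a - b \not\equiv 0 \pmod n$; so it suffices to show that $\zeta - 1 \notin \mathfrak{m}_v$ for every nontrivial $n$-th root of unity $\zeta$, equivalently that $\zeta \not\equiv 1 \pmod v$.

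The key step is the classical polynomial identity coming from factoring $X^n - 1$. Writing
\begin{equation*}
X^n - 1 = \prod_{\zeta \in \mu_n} (X - \zeta) = (X-1)\prod_{\zeta \in \mu_n \setminus \{1\}} (X - \zeta),
\end{equation*}
one differentiates (or divides and evaluates) to obtain
\begin{equation*}
n = \prod_{\zeta \in \mu_n \setminus \{1\}} (1 - \zeta).
\end{equation*}
Now I would take $v$-adic valuations, or simply reduce modulo $v$: since $\ell \nmid n$, the integer $n$ is a unit at $v$, so $n \not\equiv 0 \pmod v$. Hence no factor $1 - \zeta$ on the right-hand side can lie in $\mathfrak{m}_v$; that is, $\zeta \not\equiv 1 \pmod v$ for every nontrivial $\zeta \in \mu_n$. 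Combined with the reduction of the previous paragraph, this gives that distinct elements of $\mu_n$ have distinct images modulo $v$, which is exactly the claim.

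There is essentially no hard obstacle here; the only thing to be slightly careful about is that all the $n$-th roots of unity $\zeta_n^a$ genuinely lie in $\mathcal{O}_v$ and are $v$-adic units (so that dividing by $\zeta_n^b$ is legitimate), which holds because each satisfies the monic integral equation $X^n = 1$ and thus is a unit with unit inverse in the valuation ring. One could alternatively phrase the argument in the finite extension $\mathbb{Q}(\zeta_n)$ and a prime of its ring of integers above $\ell$, using that $\ell$ is unramified in $\mathbb{Q}(\zeta_n)$ when $\ell \nmid n$ and that $X^n - 1$ is separable over $\mathbb{F}_\ell$; but the direct factorization identity $n = \prod_{\zeta \neq 1}(1 - \zeta)$ is the cleanest route and avoids invoking ramification theory.
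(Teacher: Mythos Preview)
Your proposal is correct and follows essentially the same approach as the paper: both arguments hinge on the identity $n = \prod_{\zeta \in \mu_n \setminus \{1\}}(1-\zeta)$ obtained by evaluating $\frac{X^n-1}{X-1}$ at $X=1$, and then observe that since $\ell \nmid n$ no factor $1-\zeta$ can lie in $\mathfrak{m}_v$. Your write-up is slightly more careful in justifying why one may divide by $\zeta_n^b$ (roots of unity are $v$-adic units), but the mathematical content is the same.
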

\begin{proof}
    It is known that
	$$ \prod_{1\le k \le n-1}(x-\zeta_{n}^{k})=\frac{x^{n}-1}{x-1}.$$
	Now set $x=1$ and we obtain 
	\begin{equation} \label{equ:n}
      n = \prod_{1\le k \le n-1}(1-\zeta_{n}^{k}).
    \end{equation}
	Suppose that there exist $0\le i < j \le n-1$ such that $\zeta_{n}^{i} \equiv\zeta_{n}^{j} \mod v$. It follows that $v| (1-\zeta_{n}^{j-i})$ and hence $v|n$ by (\ref{equ:n}). This leads a contradiction since gcd$(\ell, n)=1$  and $v$ is a place dividing $\ell$.
	Therefore the $n$-th roots of unity are distinct modulo $v$.
\end{proof}

Now let $r$ be a positive integer and $\ell$ be a prime number. Take $q=\ell^r$. Let $L=\mathbb{Q}(\zeta_{\ell}, \zeta_{q-1})$ and $O_{L}$ be the integer ring of $L$. Let $\mathfrak{l}=v\cap O_L$ be a prime of $L$ lying over $\ell$. Since gcd$(\ell,q-1)=1$ and $r$ is the smallest positive integer such that 
$$\ell^{r}\equiv 1 \mod q-1,$$
by the theory of cyclotomic field, we know that  $L=\mathbb{Q}(\zeta_{\ell(q-1)})$ and the inertia degree of $\mathfrak{l}$ over $\ell$ is $r$, i.e., the residue field $\mathbb{F}_{\mathfrak{l}}=O_{L}/\mathfrak{l}$ has order $q$. Let  $\mu_{q-1}=\{\zeta_{q-1}^{j}|0\le j\le q-2\}$ be the group of $(q-1)$-st roots of unity. Then we have  

\begin{lemma} \label{thm:liftingofcharacter}
	The reduction modulo $\mathfrak{l}$ restricted on  $\mu_{q-1}$ 
	\begin{displaymath}
		\bar \ : \  \mu_{q-1} \longrightarrow \mathbb{F}_{\mathfrak{l}}^*, \ \ \ \ \ \ \ \ \ \ \ \ \ a  \ \rightarrow \  \bar a = a \bmod \mathfrak{l}, 
	\end{displaymath} 
	is a group isomorphism.
\end{lemma}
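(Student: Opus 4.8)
The plan is to verify three things in order: that $\bar{\ }$ is a well-defined group homomorphism, that it is injective, and that it is surjective; surjectivity will then follow automatically from injectivity together with a count of orders.

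For well-definedness, I would note that every element $\zeta_{q-1}^{j}$ of $\mu_{q-1}$ is a unit in $O_{L}$, with inverse $\zeta_{q-1}^{q-1-j}$, so its image under the ring homomorphism $O_{L}\to O_{L}/\mathfrak{l}=\mathbb{F}_{\mathfrak{l}}$ is a unit and lies in $\mathbb{F}_{\mathfrak{l}}^{*}$; multiplicativity of $\bar{\ }$ is inherited from that of reduction modulo $\mathfrak{l}$. So $\bar{\ }$ is a homomorphism of multiplicative groups. For injectivity, I would invoke Lemma~\ref{lem:distinctrootsofunity} with $n=q-1$: since $\gcd(\ell,q-1)=\gcd(\ell,\ell^{r}-1)=1$, the hypothesis $\ell\nmid n$ is satisfied, so the $(q-1)$-st roots of unity are pairwise distinct modulo $v$, hence modulo $\mathfrak{l}=v\cap O_{L}$. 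Thus distinct elements of $\mu_{q-1}$ have distinct images, i.e. $\ker(\bar{\ })$ is trivial.

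For surjectivity, I would simply compare cardinalities: $\mu_{q-1}$ has exactly $q-1$ elements because $\zeta_{q-1}$ has exact order $q-1$ by definition of a primitive root of unity, while $\mathbb{F}_{\mathfrak{l}}^{*}$ also has $q-1$ elements, since it was already recorded that $\mathbb{F}_{\mathfrak{l}}$ has order $q$ (the inertia degree of $\mathfrak{l}$ over $\ell$ being $r$). An injective map between finite sets of equal cardinality is a bijection, so $\bar{\ }$ is onto, and the proof is complete. The only step carrying real content is injectivity, and Lemma~\ref{lem:distinctrootsofunity} delivers it directly; the remaining steps are bookkeeping, so I expect no genuine obstacle here — the lemma is essentially a packaging of the earlier distinctness result in the language of group isomorphisms.
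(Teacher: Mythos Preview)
Your proof is correct and follows essentially the same route as the paper: both establish that reduction is a group homomorphism, invoke Lemma~\ref{lem:distinctrootsofunity} with $n=q-1$ and $\gcd(\ell,q-1)=1$ for injectivity, and conclude by comparing the cardinalities $|\mu_{q-1}|=|\mathbb{F}_{\mathfrak{l}}^{*}|=q-1$. Your version is slightly more explicit about why the image lands in $\mathbb{F}_{\mathfrak{l}}^{*}$, but the argument is otherwise identical.
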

\begin{proof}
	The reduction modulo $\mathfrak{l}$ is obviously a group homomorphism. Since  gcd$(q-1,\ell)=1$, it follows from Lemma \ref{lem:distinctrootsofunity} that the elements of  $\mu_{q-1}$ are distinct modulo $\mathfrak{l}$, and this implies that the homomorphism is injective. Note that both $\mu_{q-1}$ and $\mathbb{F}_{\mathfrak{l}}^*$  have $q-1$ elements, and it shows that the homomorphism is in fact an isomorphism.
\end{proof}

Let $\ell$ be a prime number  and $v$ be a place dividing $\ell$ of $\overline{\mathbb{Q}}$. Let $n$ be an positive integer and $\varepsilon$ be a Dirichlet character modulo $n$.
Let $E$ denote the number field which is obtained by adjoining all the values of $\varepsilon$ to $\mathbb{Q}$.  For  a prime $\lambda$ of $E$ lying over $\ell$, let  $\bar\varepsilon$ denote  the reduction of $\varepsilon$ mod $\lambda$. Then we have

\begin{theorem} \label{prop:teichmullerlifting}
	There exists a Dirichlet character $\mathrm{T}(\bar\varepsilon)$ modulo $n$ which satisfies:
	\begin{enumerate}
		\item $\mathrm{T}(\bar\varepsilon) \equiv \varepsilon \mod v$; \ and   
		\item  $\mathrm{ker}(\mathrm{T}(\bar\varepsilon))=\mathrm{ker}(\bar\varepsilon)$.
	\end{enumerate}
\end{theorem}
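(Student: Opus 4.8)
The plan is to construct $\mathrm{T}(\bar\varepsilon)$ by composing $\bar\varepsilon$ with a Teichm\"uller section of the reduction map, after first arranging that the image of $\bar\varepsilon$ lies in a finite field $\mathbb{F}_q$ to which Lemma \ref{thm:liftingofcharacter} can be applied.

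First I would note that the image of $\bar\varepsilon$ is a finite subgroup of $\overline{\mathbb{F}}_\ell^*$, hence cyclic of some order $d$ with $\mathrm{gcd}(d,\ell)=1$, since the multiplicative group of a field of characteristic $\ell$ has no $\ell$-torsion. Choosing a positive integer $r$ with $d\mid \ell^r-1$ and setting $q=\ell^r$, the image of $\bar\varepsilon$ is then contained in the unique subgroup $\mathbb{F}_q^*$ of $\overline{\mathbb{F}}_\ell^*$ of order $q-1$. Put $L=\mathbb{Q}(\zeta_\ell,\zeta_{q-1})$ and $\mathfrak{l}=v\cap O_L$; as recalled just before Lemma \ref{thm:liftingofcharacter}, the residue field of $\mathfrak{l}$ has order $q$, so $\mathbb{F}_{\mathfrak{l}}=\mathbb{F}_q$ as subfields of $\overline{\mathbb{F}}_\ell$. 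By Lemma \ref{thm:liftingofcharacter}, reduction modulo $\mathfrak{l}$ restricts to a group isomorphism $\mu_{q-1}\to \mathbb{F}_q^*$; let $\omega\colon \mathbb{F}_q^*\to \mu_{q-1}\subseteq L^*\subseteq \overline{\mathbb{Q}}^*$ denote its inverse.

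I would then set $\mathrm{T}(\bar\varepsilon):=\omega\circ\bar\varepsilon$, which is a composition of group homomorphisms $(\mathbb{Z}/n\mathbb{Z})^*\to\mathbb{F}_q^*\to\overline{\mathbb{Q}}^*$ and is therefore a Dirichlet character modulo $n$. For property (1): for each $x$, the reduction of $\mathrm{T}(\bar\varepsilon)(x)=\omega(\bar\varepsilon(x))$ modulo $v$ equals $\bar\varepsilon(x)$, because $\omega$ is a section of reduction modulo $\mathfrak{l}=v\cap O_L$; and $\bar\varepsilon(x)$ is by definition the reduction of $\varepsilon(x)$ modulo $v$, so $\mathrm{T}(\bar\varepsilon)(x)\equiv\varepsilon(x)\bmod v$. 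For property (2): since $\omega$ is injective and $\omega(1)=1$, we have $\mathrm{T}(\bar\varepsilon)(x)=1$ if and only if $\bar\varepsilon(x)=1$, that is, $\mathrm{ker}(\mathrm{T}(\bar\varepsilon))=\mathrm{ker}(\bar\varepsilon)$.

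The one delicate point is the bookkeeping among the residue-field identifications: one must check that the copy of $\mathbb{F}_q$ produced by $\mathfrak{l}=v\cap O_L$ is the same subfield of $\overline{\mathbb{F}}_\ell$ that contains the image of $\bar\varepsilon$, and that the reduction used to define $\bar\varepsilon$ agrees with the restriction of the place $v$. Both are immediate from the uniqueness of subfields of $\overline{\mathbb{F}}_\ell$ of a given order together with the compatibility $v|_L=\mathfrak{l}$; once these are pinned down, the verification of (1) and (2) is the short computation above. I would also remark that $\mathrm{T}(\bar\varepsilon)$ is independent of the choice of $r$ and is in fact the unique lift of $\bar\varepsilon$ to a Dirichlet character whose image has order prime to $\ell$, which justifies the name and the notation $\mathrm{T}(\bar\varepsilon)$.
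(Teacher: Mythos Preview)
Your proposal is correct and follows essentially the same approach as the paper: construct $\mathrm{T}(\bar\varepsilon)=\omega\circ\bar\varepsilon$ where $\omega$ is the inverse of the reduction isomorphism of Lemma~\ref{thm:liftingofcharacter}, then read off properties (1) and (2) from the fact that $\omega$ is a multiplicative section of reduction modulo $v$. The only cosmetic difference is that the paper takes $q=\#\mathbb{F}_\lambda$ (the order of the residue field of $E$ at $\lambda$) rather than choosing $q$ from the order of the image of $\bar\varepsilon$, so that $\bar\varepsilon$ automatically lands in $\mathbb{F}_\lambda^*$ without the extra embedding step you describe.
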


\begin{proof}
	Let $O_E$ be the integer ring of $E$ and $\mathbb{F}_{\lambda}=O_{E}/\lambda$ be the residue field. Then $\bar{\varepsilon}$ factors as
	\begin{displaymath}
		\bar\varepsilon : \  (\mathbb{Z}/n\mathbb{Z})^{\ast} \xrightarrow{\ \ \ \varepsilon \ \ \ } O_E^* \xrightarrow{\mod \lambda \ } \mathbb{F}_{\lambda}^*.
	\end{displaymath} 
	
	We let $q=\#\mathbb{F}_{\lambda}$, and then by Lemma \ref{thm:liftingofcharacter}, we have a group isomorphism
	\begin{displaymath}
		\omega: \mathbb{F}_{\lambda}^*\longrightarrow  \mu_{q-1},
	\end{displaymath} 
	which  is the inverse of the isomorphism in Lemma \ref{thm:liftingofcharacter}. Thus it satisfies $\omega(x)\equiv x\mod v$. Composing $\omega$ with $\bar\varepsilon$, we obtain a Dirichlet character T$(\bar\varepsilon)=\omega \circ \bar{\varepsilon}$ modulo $n$ which satisfies:
	\begin{enumerate}
		\item $\omega \circ \bar{\varepsilon} \equiv \varepsilon \mod v$; \ and   
		\item  ker$(\omega \circ \bar{\varepsilon})=$ker$(\bar\varepsilon)$.
	\end{enumerate}
\end{proof}

\begin{definition}  \label{def:teichmullerlifting}
	A Dirichlet character $\mathrm{T}(\bar\varepsilon)$ which satisfies the conditions $(1)$ and $(2)$ in Theorem \ref{prop:teichmullerlifting} is called a Teichm\"uller lifting of $\bar{\varepsilon}$. 
\end{definition}

Now let $\phi(n)=n \prod\limits_{p|n} (1-\frac{1}{p})$ be the Euler's totient function. Then we have
\begin{lemma}  \label{lem:teichmullerliftingforlandphicoprime}
	If gcd$(\ell,\phi(n))=1$, then $\varepsilon$ is a Teichm\"uller lifting of $\bar{\varepsilon}$.
\end{lemma}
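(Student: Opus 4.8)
The plan is to show that when $\gcd(\ell,\phi(n))=1$, the character $\varepsilon$ itself already satisfies the two defining conditions of a Teichm\"uller lifting of $\bar\varepsilon$, namely $\varepsilon\equiv\bar\varepsilon\bmod v$ (which is automatic, since $\bar\varepsilon$ is by definition the reduction of $\varepsilon$) and $\mathrm{ker}(\varepsilon)=\mathrm{ker}(\bar\varepsilon)$. The only real content is the equality of kernels, and for that the key observation is that the image of $\varepsilon$ consists of roots of unity whose order divides $\#(\mathbb{Z}/n\mathbb{Z})^{\ast}=\phi(n)$, hence is prime to $\ell$.

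First I would note that for any $x\in(\mathbb{Z}/n\mathbb{Z})^{\ast}$, the value $\varepsilon(x)$ is a root of unity of order dividing $\phi(n)$, so $\varepsilon$ factors through the group $\mu_{\phi(n)}$ of $\phi(n)$-th roots of unity (or, more economically, through $\mu_m$ where $m$ is the exponent of the image; in any case $m\mid\phi(n)$, so $\gcd(\ell,m)=1$). Since $\gcd(\ell,\phi(n))=1$, we have $\ell\nmid m$, so Lemma~\ref{lem:distinctrootsofunity} applies and tells us that the $m$-th roots of unity are distinct modulo $v$; equivalently, the reduction map $\overline{\ (\ )\ }\colon\mu_m\to\overline{\mathbb{F}}_\ell^{\,\ast}$ is injective on the image of $\varepsilon$. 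Then $\mathrm{ker}(\bar\varepsilon)=\{x:\overline{\varepsilon(x)}=1\}=\{x:\varepsilon(x)=1\}=\mathrm{ker}(\varepsilon)$, where the middle equality uses exactly this injectivity (together with $\bar 1=1$). This establishes condition $(2)$; condition $(1)$ holds by the very definition of $\bar\varepsilon$ as $\varepsilon\bmod\lambda$ combined with the compatibility of $\lambda$ with the fixed place $v$. Hence $\varepsilon$ is a Teichm\"uller lifting of $\bar\varepsilon$.

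I do not anticipate a genuine obstacle here; the statement is essentially a direct corollary of Lemma~\ref{lem:distinctrootsofunity}. The one point that deserves a careful sentence is the bookkeeping of places: one should make sure $\bar\varepsilon=\varepsilon\bmod\lambda$ with $\lambda=v\cap O_E$, so that "$\equiv\bmod v$" and "$\equiv\bmod\lambda$" agree on values of $\varepsilon$, which is exactly the setup already fixed in the paragraph preceding Theorem~\ref{prop:teichmullerlifting}. A secondary (purely expository) point is to decide whether to phrase the argument via $\mu_{\phi(n)}$ or via the exact exponent $m$ of $\mathrm{im}(\varepsilon)$; using $\phi(n)$ is cleanest since the hypothesis is stated in terms of $\phi(n)$, and $m\mid\phi(n)$ makes the divisibility $\ell\nmid m$ immediate.
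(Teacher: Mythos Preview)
Your proposal is correct and follows essentially the same approach as the paper: both arguments reduce to showing $\ker(\bar\varepsilon)=\ker(\varepsilon)$ by noting that values of $\varepsilon$ are $\phi(n)$-th roots of unity and then invoking Lemma~\ref{lem:distinctrootsofunity} to conclude that reduction mod $v$ is injective on these. The paper's version is slightly more terse (it only writes out the nontrivial inclusion $\ker(\bar\varepsilon)\subseteq\ker(\varepsilon)$), but the substance is identical.
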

\begin{proof}
   It suffices to prove  ker$(\bar\varepsilon)\subseteq$ ker$(\varepsilon)$. For any $x\in\mathrm{ker}(\bar\varepsilon)$, we have $\overline{\varepsilon(x)}\equiv 1 \mod v$. Since  gcd$(\ell,\phi(n))=1$ and $\varepsilon(x)$ is a $\phi(n)$-th root of unity, it follows from Lemma \ref{lem:distinctrootsofunity} that $\varepsilon(x)=1$. Hence we have $x\in$  ker$(\varepsilon)$ and ker$(\bar\varepsilon)\subseteq\mathrm{ker}(\varepsilon)$.
\end{proof}

\section{Realization of modular Galois representations in Jacobians of the smallest possible dimensions}  \label{sec:algorithm}
In this section, we describe our method to find the Jacobians of modular curves, which have the smallest possible dimensions in a well-defined sense, to realize the modular Galois representations. As examples, we will give the explicit results of the cases with $\ell$ up to $13$ and $N$ up to $6$.

In this section, we follow the notation of Section \ref{sec:preliminary}. Moreover,  the following notations may be used in the rest of this paper.

Let $n$ be a positive integer and $H$ be a subgroup of $(\mathbb{Z}/n\mathbb{Z})^{\ast}$. Then we let $\varGamma_{H}(n)$ denote the congruence subgroup
$$\varGamma_{H}(n)=
\left\lbrace 
\begin{pmatrix}
a & b\\
c & d
\end{pmatrix}  \in SL(2,\mathbb{Z}) \ \mid \   c\equiv 0 \ (\mathrm{mod} \ n) \ \ \mathrm{and} \  \ d \ (\mathrm{mod} \ n)  \in H
\right\rbrace.$$

Let $\varphi_{n}$ denote the surjection:
\begin{displaymath} 
\varphi_{n} :  \ \varGamma_{0}(n) \twoheadrightarrow (\mathbb{Z}/n\mathbb{Z})^{\ast} , \ \ \ \ \ \ \
\begin{pmatrix} a & b\\ c & d \end{pmatrix} \rightarrow \overline{d}. 
\end{displaymath}
Then we know the kernel of $\varphi_{n}$ is $\varGamma_{1}(n)$ and the preimage $\varphi_{n}^{-1}(H)$ of $H$ under $\varphi_{M}$ is $\varGamma_{H}(n)$.

\subsection{Twists of modular Galois representations}

In order to discuss the case with $\ell<k-1$, we first give some results on the twists of modular Galois representations by the cyclotomic character.

Let $\theta=q\frac{d}{dq}$  be the classical differential operator. If  $f\in S_{k}(\varGamma_{1}(N),\varepsilon)$ is an eigenform, then $\theta f \in S_{k+\ell+1}(\varGamma_{1}(N),\varepsilon)$ is also an eigenform. Suppose $f = \sum_{n>0} a_{n} (f) q^{n}$. Then we know $\theta f$ has $q$-expansion $\sum_{n>0} na_{n} (f) q^{n}$. It follows from Theorem \ref{thm:deligne} that
$$  \rho_{\theta f, \ell}=  \rho_{f,\ell} \otimes \chi_{\ell}, $$
where $\chi$ is the \textit{mod $\ell$ cyclotomic character}. 

For the case with $\ell<k-1$, the Galois representation associated to $f$ can be reduced to the case with $\ell\ge k-1$ by twisting. In fact we have the following result which is a corollary of \cite[Theorem 3.4]{ediweight}.

\begin{theorem} \label{thm:twist}  
	Let $\ell\ge 5$ be a prime number, $N>0$ an integer prime to $\ell$, and $k\ge 2$.  Let $f\in S_{k}(\varGamma_{1}(N),\varepsilon)$ be an eigenform  and $\lambda$ be a prime of $K_f$ lying over $\ell$. Suppose the representation $\rho_{f,\lambda}$ is irreducible and $a_{1}(f) \not \equiv 0 \mod \lambda$. Then there exist integers $i$ and $k'$ with $0\le i\le \ell-1, \ k'\le \ell+1$,  a newform $g\in S_{k'}(\varGamma_{1}(M))$ for some $M|N$, and a prime   $\mathfrak{l}$ of $K_g$ lying over $\ell$, such that $\rho_{f,\lambda}$ is isomorphic to $\rho_{g,\mathfrak{l}}\otimes \chi^{i}_{\ell}$.
	
	Moreover, the  character of $f$ is induced by the character of $g$.
\end{theorem}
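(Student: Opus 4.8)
The plan is to deduce Theorem~\ref{thm:twist} from \cite[Theorem 3.4]{ediweight} together with the twisting formula $\rho_{\theta f,\ell}\cong\rho_{f,\ell}\otimes\chi_\ell$ recalled above. First I would recall the precise content of Edixhoven's weight result: given an irreducible mod~$\ell$ modular representation $\bar\rho$ arising from some eigenform of weight $k$ and level $N$ prime to $\ell$, with $\bar\rho$ not a twist of a representation of smaller level, one can after a suitable twist by a power $\chi_\ell^{j}$ realize $\bar\rho\otimes\chi_\ell^{-j}$ by a newform $g$ of weight $k'$ with $2\le k'\le\ell+1$ on $\varGamma_1(M)$ for some divisor $M$ of $N$; the essential point is the existence of a low-weight form in the sense of Serre's conjecture (Katz/Serre weight optimization), which is exactly where the hypothesis $a_1(f)\not\equiv 0\bmod\lambda$ (finite slope / $\ell\nmid a_\ell$ after going up in weight, or more precisely the condition ensuring $\bar\rho_{f,\lambda}$ comes from a form of weight $\le\ell+1$) is used. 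Applying this to $\bar\rho_{f,\lambda}$ produces the integer $i$ (with $0\le i\le\ell-2$, extended to $0\le i\le\ell-1$ trivially), the weight $k'\le\ell+1$, the level $M\mid N$, the newform $g\in S_{k'}(\varGamma_1(M))$, and the prime $\mathfrak l\mid\ell$ of $K_g$ with $\rho_{f,\lambda}\cong\rho_{g,\mathfrak l}\otimes\chi_\ell^{i}$.

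Next I would address the moreover clause concerning nebentypus characters. The key observation is that the nebentypus is detected by the restriction of the determinant of $\rho$ to inertia at primes $p\mid N$ (equivalently by the action of the diamond operators), while twisting by $\chi_\ell^i$ only affects the determinant by the power $\chi_\ell^{2i}$ of the cyclotomic character, which is unramified outside $\ell$. Concretely, $\det\rho_{f,\lambda}=\bar\varepsilon\cdot\chi_\ell^{k-1}$ and $\det\rho_{g,\mathfrak l}=\bar\varepsilon_g\cdot\chi_\ell^{k'-1}$, so from $\rho_{f,\lambda}\cong\rho_{g,\mathfrak l}\otimes\chi_\ell^{i}$ we get $\bar\varepsilon\cdot\chi_\ell^{k-1}=\bar\varepsilon_g\cdot\chi_\ell^{k'-1+2i}$; comparing ramification away from $\ell$, the prime-to-$\ell$ parts force $\bar\varepsilon$ and $\bar\varepsilon_g$ to have the same restriction to $(\mathbb Z/N\mathbb Z)^*$ modulo the cyclotomic contribution, hence $\bar\varepsilon$ is the character induced (in the sense of Section~\ref{sec:character}, via $\pi_{N,M}$) from $\bar\varepsilon_g$, up to a power of the mod~$\ell$ cyclotomic character restricted to $(\mathbb Z/N\mathbb Z)^*$ — but the latter is trivial since $\gcd(N,\ell)=1$ means $\chi_\ell$ restricted to the prime-to-$\ell$ part carries no information about the conductor at $N$. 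So the character of $f$ is induced by that of $g$, as claimed. (One must be a little careful that "character of $f$" here refers to $\bar\varepsilon=\varepsilon\bmod\lambda$ rather than $\varepsilon$ itself, or invoke Lemma~\ref{lem:teichmullerliftingforlandphicoprime} when $\gcd(\ell,\phi(N))=1$ to lift back.)

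The main obstacle I anticipate is matching the exact normalization and hypotheses of \cite[Theorem 3.4]{ediweight} to the statement here: Edixhoven's theorem is phrased intrinsically in terms of $\bar\rho$, and one must check that (a) $\bar\rho_{f,\lambda}$ indeed satisfies the running hypotheses there (modularity is automatic from Deligne, irreducibility is assumed), and (b) the condition $a_1(f)\not\equiv 0\bmod\lambda$ — which for a normalized eigenform means the operator $T_1=1$ so this should be read as the genuinely restrictive condition that after applying $\theta$ to reach weight in the range $[2,\ell+1]$ the resulting form is $\ell$-ordinary, i.e. the relevant $U_\ell$- or $T_\ell$-eigenvalue is a $\lambda$-unit, which is precisely what guarantees the optimized weight $k'$ is $\le\ell+1$ rather than requiring $k'=\ell+1$ with the companion-form ambiguity. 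I would isolate this compatibility as a short lemma or remark and otherwise treat the existence of $(i,k',M,g,\mathfrak l)$ as a direct citation. The bounds $0\le i\le\ell-1$ and $k'\le\ell+1$ then come for free from the range of Serre weights and the fact that $\chi_\ell^{\ell-1}=1$.
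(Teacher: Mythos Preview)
Your overall strategy of citing \cite[Theorem 3.4]{ediweight} is the same starting point as the paper, but there are two genuine gaps in your proposal.

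First, you overstate what Edixhoven's Theorem 3.4 delivers. In the paper's usage it produces only an \emph{eigenform} $g'\in S_{k'}(\varGamma_1(N),\varepsilon)$ of the \emph{same} level $N$ and the \emph{same} nebentypus $\varepsilon$ as $f$; it does not hand you a newform, nor a possibly smaller level $M\mid N$. The passage from $g'$ to a newform $g\in S_{k'}(\varGamma_1(M))$ is a separate step, carried out in the paper via \cite[Theorem 1.2]{ribetnebentypus}. Your sketch collapses weight optimization and the newform/level step into a single citation, and in doing so you lose the mechanism that actually yields $M\mid N$. Relatedly, your reading of the hypothesis $a_1(f)\not\equiv 0\pmod\lambda$ as an ordinarity/finite-slope condition is off: in the paper it is used for the much more mundane purpose of guaranteeing that the eigenform $g'$ produced by Edixhoven's theorem is nonzero, so that one can normalize it before invoking Ribet.

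Second, your determinant argument for the ``moreover'' clause only establishes that $\bar\varepsilon$ is induced by $\bar\varepsilon_g$ modulo $\lambda$; you yourself flag this, but the theorem asserts the statement for the characters themselves, not their reductions, and your suggested fallback to Lemma~\ref{lem:teichmullerliftingforlandphicoprime} introduces an unnecessary extra hypothesis $\gcd(\ell,\phi(N))=1$. The paper avoids this entirely: since Edixhoven's theorem already gives $g'$ with nebentypus exactly $\varepsilon$, Ribet's theorem then says directly that $\varepsilon(n)=\varepsilon_g(n)$ for all $n$ coprime to $N$, i.e.\ $\varepsilon$ is induced by $\varepsilon_g$ on the nose. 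So the character statement is a byproduct of the two-step Edixhoven--Ribet route rather than a separate determinant computation.
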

\begin{proof} 
	By \cite[Theorem 3.4]{ediweight}, we have  $i$ and $k'$ with $0\le i\le \ell-1, \ k'\le \ell+1$, and an eigenform $g'\in S_{k'}(\varGamma_{1}(N),\varepsilon)$, and a prime   $\mathfrak{l}$ of $K_{g'}$ lying over $\ell$, such that $\rho_{f,\lambda}$ is isomorphic to $\rho_{g',\mathfrak{l}}\otimes \chi^{i}_{\ell}$.
	
    Since the representation $\rho_{f,\lambda}$ is irreducible, so is  $\rho_{g',\mathfrak{l}}\cong \rho_{f,\lambda}\otimes \chi^{-i}_{\ell}$. It follows that $g'$ is a cuspidal eigenform.  By $a_1(f) \not \equiv 0 \mod \lambda$, we know $g'$ is nonzero, and thus we have $a_1(g')\ne0$. Let $g''=(a_1(g'))^{-1}g'$ be the normalized eigenform and then we know $g''\in S_{k'}(\varGamma_{1}(N),\varepsilon)$ and $\rho_{f,\lambda}\cong\rho_{g'',\mathfrak{l}}\otimes \chi^{i}_{\ell}$. 
    
    By \cite[Theorem 1.2]{ribetnebentypus}, there is a newform $g\in S_{k'}(\varGamma_{1}(M),\varepsilon_g)$ for some divisor $M$ of $N$ such that $a_{n}(g)=a_n(g'')$ and $\varepsilon(n)=\varepsilon_g(n)$ for all $n$ with gcd$(n,N)$=1. Therefore, we have $\rho_{f,\lambda}\cong\rho_{g,\mathfrak{l}}\otimes \chi^{i}_{\ell}$ and  the  character $\varepsilon$ is induced by the character of $g$.
\end{proof}

Since $g\in S_{k'}(\varGamma_{1}(M))$ is naturally a normalized eigenform on $\varGamma_{1}(N)$, we can determine $i$, $k'$ and $g$ in Theorem \ref{thm:twist} by the following theorem.

\begin{theorem} \label{thm:kandi}  
	Let $f$ and $g$ be two normalized eigenforms on $\varGamma_{1}(N)$ of weight $k$ and $k'$, respectively.  Let $\ell$ be a prime number. Let $\lambda$  and $\mathfrak{l}$  be  primes of $K_f$ and $K_g$ lying over $\ell$.  Let $i$ be an integer with $0\le i \le \ell-1$.  Then  $\rho_{f,\lambda}$ is isomorphic to $\rho_{g,\mathfrak{l}}\otimes \chi^{i}_{\ell}$ if and only if $k\equiv k'+2i \mod \ell-1$ and  $a_{p}(f) = p^{i}a_{p}(g)$ in $\overline{\mathbb{F}}_{\ell}$ for all primes $p$ with  $p \le \frac{[SL_{2}(\mathbb{Z}):\varGamma_{1}(N)]}{12}\cdot(\ell^2-1+max\{k,k'\})$.
\end{theorem}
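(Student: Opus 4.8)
The plan is to prove both directions by comparing $q$-expansions and invoking the effective Sturm-type bound built into the hypothesis. The forward direction is immediate: if $\rho_{f,\lambda}\cong\rho_{g,\mathfrak{l}}\otimes\chi^i_\ell$, then comparing determinants gives $\varepsilon_f\langle\chi\rangle^{k-1}\equiv\varepsilon_g\langle\chi\rangle^{k'-1}\chi^{2i}$ mod $\ell$; since on $\varGamma_1(N)$ with $(N,\ell)=1$ the nebentypus is a fixed character and $\chi$ has order $\ell-1$, this forces $k-1\equiv k'-1+2i \pmod{\ell-1}$, i.e.\ $k\equiv k'+2i\pmod{\ell-1}$. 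Then for each prime $p\nmid N\ell$ the trace relation from Theorem~\ref{thm:deligne}, namely $a_p(f)\equiv\mathrm{tr}\,\rho_{f,\lambda}(Frob_p)=p^i\,\mathrm{tr}\,\rho_{g,\mathfrak{l}}(Frob_p)\equiv p^i a_p(g)\pmod\lambda$, gives the desired identity in $\overline{\mathbb F}_\ell$ for all such $p$, and for $p\mid N\ell$ one needs a short separate argument (or one simply notes the claim as stated only concerns the given range of $p$, which for $p\mid N$ can be handled by the recursion/Hecke relations, and $p=\ell$ is excluded by weight considerations or handled directly).

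For the converse, suppose $k\equiv k'+2i\pmod{\ell-1}$ and $a_p(f)=p^i a_p(g)$ in $\overline{\mathbb F}_\ell$ for all primes $p$ below the stated bound. The strategy is: first replace $g$ by the twisted eigenform $\theta^i g$ (iterating the operator $\theta=q\,d/dq$ discussed above, or rather its effect $a_n\mapsto n^i a_n$ on $q$-expansions), which by the displayed relation $\rho_{\theta g,\ell}=\rho_{g,\ell}\otimes\chi_\ell$ satisfies $\rho_{\theta^i g,\mathfrak l}\cong\rho_{g,\mathfrak l}\otimes\chi^i_\ell$; note $\theta^i g$ is again an eigenform, a priori of weight $k'+i(\ell+1)$, but reducing mod $\ell$ its Hecke eigensystem $a_n\mapsto n^i a_n(g)$ agrees with that of a form of weight congruent to $k'+2i\equiv k\pmod{\ell-1}$ by Fermat's little theorem ($n^{\ell+1}\equiv n^2$). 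So it suffices to show: two normalized eigenforms on $\varGamma_1(N)$, of weights $k$ and some $k''$ with $k''\le$ roughly $\ell^2+\ell+\max\{k,k'\}$, whose mod-$\ell$ Hecke eigenvalues $a_p$ agree for all primes $p$ up to $\frac{[SL_2(\mathbb Z):\varGamma_1(N)]}{12}\cdot(\ell^2-1+\max\{k,k'\})$, have the same mod-$\ell$ Galois representation.

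The main obstacle—and the crux of the proof—is establishing this last step via a Sturm bound argument adapted to the situation where the two forms have \emph{different} weights. The idea: multiply the lower-weight form by a suitable power of the Hasse invariant $E_{\ell-1}$ (which is $\equiv 1\pmod\ell$ and has weight $\ell-1$) to bring both forms to a common weight $\tilde k=\max$ of the two weights rounded up appropriately; their difference then lies in $M_{\tilde k}(\varGamma_1(N),\overline{\mathbb F}_\ell)$, and the classical Sturm bound says a form in this space vanishing mod $\ell$ at all $q$-exponents up to $\frac{\tilde k}{12}[SL_2(\mathbb Z):\varGamma_1(N)]$ is identically zero. One must check the numerology: $\tilde k$ is on the order of $\ell^2-1+\max\{k,k'\}$ (the $i(\ell+1)\le (\ell-1)(\ell+1)=\ell^2-1$ coming from the twist and the rounding to a common weight), matching the stated bound. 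Having forced the full $q$-expansions equal mod $\ell$, all Hecke eigenvalues $a_n$ agree mod $\ell$ including the bad ones, hence the two semisimple representations have equal traces on all $Frob_p$ ($p\nmid N\ell$) and are therefore isomorphic by Brauer–Nesbitt/Chebotarev, giving $\rho_{f,\lambda}\cong\rho_{\theta^i g,\mathfrak l}\cong\rho_{g,\mathfrak l}\otimes\chi^i_\ell$. The delicate points to get right are (i) that checking primes $p$ up to the bound suffices to conclude \emph{all} coefficients agree (this uses multiplicativity of Hecke eigenvalues plus the fact that the comparison space is spanned by the single difference form, so only the generators $a_p$ matter up to the Sturm bound), and (ii) tracking the exact constant in front of $\ell^2-1+\max\{k,k'\}$ so it lands at $\frac{[SL_2(\mathbb Z):\varGamma_1(N)]}{12}$ rather than something larger.
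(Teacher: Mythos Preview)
The paper does not give a proof: its entire argument is the one-line reference ``See \cite[Theorem~3.5]{bruin}.'' Your sketch follows what is certainly the standard route and very likely the one in Bruin's thesis: for the converse, theta-twist $g$, equalize weights with powers of the Hasse invariant $E_{\ell-1}\equiv 1\pmod\ell$, apply a Sturm bound in the common weight, and conclude by Brauer--Nesbitt and Chebotarev.

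There is, however, a real gap at your ``delicate point (i)''. Sturm's theorem needs $a_n(f)\equiv n^{i} a_n(g)$ for \emph{all} $n$ up to the bound, whereas the hypothesis supplies only primes $p$. You claim multiplicativity bridges the gap, but the prime-power recursion $a_{p^2}=a_p^2-\varepsilon(p)p^{k-1}$ involves the nebentypus, and nothing in the stated hypotheses tells you $\overline{\varepsilon_f}=\overline{\varepsilon_g}$. (In the forward direction this equality of reduced characters falls out of the determinant comparison, assuming $\ell\nmid N$, since $\varepsilon_f,\varepsilon_g$ have conductor dividing $N$ while $\chi_\ell$ has conductor $\ell$; in the converse direction you are trying to \emph{prove} the isomorphism, so you cannot invoke it.) Your parenthetical that ``the comparison space is spanned by the single difference form'' does not help: the difference $f - E_{\ell-1}^{\,j}\theta^{i} g$ lives in the full space of mod-$\ell$ forms of weight $\tilde k$ on $\varGamma_1(N)$, which is far from one-dimensional. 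A correct argument must either add the character congruence to the hypotheses (check Bruin's precise statement), or recover it from the identity $\langle p\rangle = p^{1-k}(T_p^2-T_{p^2})$---but that requires $a_{p^2}$, i.e.\ composite indices, which is exactly what you were trying to avoid.
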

\begin{proof}
	See \cite[Theorem 3.5]{bruin}.
\end{proof}

Since the  Dirichlet character $\varepsilon$ of $f$ is induced by the character of $g$ as stated in Theorem \ref{thm:twist}, we know the divisor $M$ of $N$ should be divisible by the conductor of $\varepsilon$. Moreover,	if we suppose  $\ell\ge k-1$, the integers $i$ and $k'$ as given in Theorem \ref{thm:twist} can be taken to be $0$ and $k$, respectively. Then we can write down the algorithm for a normalized eigenform $f$ to find such an integer $i$ and a  newform $g$ as given in Theorem \ref{thm:twist}.

\begin{algorithm}    \label{alg:twistforsmalll}
	Let $\ell\ge 5$ be a prime number, $N>0$ an integer prime to $\ell$, and $k\ge 2$. Let $f$ be a normalized eigenform on $\varGamma_{1}(N)$ of weight $k$ and $\lambda$ be a prime of $K_f$ lying over $\ell$.  Let $d$ be the conductor of the Dirichlet character of $f$. This algorithm outputs integers $i$ and $k'$ with $0\le i\le \ell-1, \ k'\le \ell+1$,   a newform $g\in S_{k'}(\varGamma_{1}(M)$ for some divisor $M$ of $N$,  and a prime   $\mathfrak{l}$ of $K_g$ lying over $\ell$, such that $\rho_{f,\lambda}$ is isomorphic to $\rho_{g,\mathfrak{l}}\otimes \chi^{i}_{\ell}$.
\begin{enumerate}[$1.$]
	\item Compute $B=\frac{[SL_{2}(\mathbb{Z}):\varGamma_{1}(N)]}{12}\cdot(\ell^2-1+max\{k,k'\})$ and  $a_{p}(f)$ for all primes $p$ with  $p \le B$.
	\item  Compute the set $S$ consisting of all the divisors of $N$ that is divisible by $d$.
	\item If $\ell\ge k-1$, set $i\leftarrow0$, $k'\leftarrow k$ and go to step $8$. Otherwise go to step $4$.
    \item Set $i\leftarrow0$. 
	\item   Set $k' \leftarrow2$. 
	\item If $k'>\ell+1$, go to step $10$. Otherwise go to step $7$.
	\item If  $k\equiv k'+2i \mod \ell-1$, go to step $8$. Otherwise,  go to step $9$.
	\item  If $S$ is empty, go to step $9$. Otherwise, take $M$ in $S$ and do:
	      \begin{enumerate}

	\item   Compute all normalized newforms $F$ in  $S_{k'}(\varGamma_{1}(M))$ using modular symbols. 
	\item   For each $g$ in $F$, do:
	      \begin{enumerate}
	      	\item Compute  $p^{i}a_{p}(g)$  for all primes $p$ with  $p\le B$ and compute primes $P$ of the composed field $K_{f}K_{g}$ lying over $\ell$. 
	      	\item 	         If there is a prime  $\mathfrak{l} \in P$ such that  $a_{p}(f) \equiv p^{i}a_{p}(g) \mod \mathfrak{l}$ for all primes $p$ with  $p    \le B$,  put $\mathfrak{l}=\mathfrak{l}\cap K_{g}$ and then output $i$, $k'$, $M$, $g$, $\mathfrak{l}$, and terminate. 
	      \end{enumerate}
     \item Set $S\leftarrow S-\{M\}$ and go to step $8$.  
          \end{enumerate}
	\item   Set  $k' \leftarrow k'+1$ and go to step $6$. 
	\item Set $i\leftarrow i+1$ and go to step $5$.
\end{enumerate}
\end{algorithm}

\subsection{The largest possible congruence subgroup associated to $\rho_{f,\ell}$}
Let $N>0$ be an integer and $f$ be an eigenform of level $N$. In this subsection, we present an algorithm to obtain a congruence subgroup $\varGamma_{H}$, on which there exists a weight $2$ eigenform $f_2$ such that $\rho_{f,\ell}$ is isomorphic to a twist of $\rho_{f_2,\ell}$. 

Moreover, we will prove the group $\varGamma_{H}$ produced by this algorithm is  the largest possible congruence subgroup with $\varGamma_{1}(N')\subseteq\varGamma_{H}\subseteq\varGamma_{0}(N')$, on which such eigenform $f_2$ exists. Here $N'$ is equal to $N\ell$ if the weight $k$ of $f$ is greater than $2$ and $N'=N$ if $k=2$.

First we state the following result without proof, which has been obtained independently by H. Carayol and J-P. Serre, and is usually called Carayol's Lemma.

\begin{theorem}[Carayol's Lemma] \label{carayol'slemma}
	Let $\ell\ge5$ be a prime and $v$ be a place dividing $\ell$ of $\overline{\mathbb{Q}}$. Let $f\in S_{k}(\varGamma_{1}(N),\varepsilon)$ be a normalized eigenform. Suppose the representation $\rho_{f,\ell}$ is irreducible.  Let $\varepsilon'$ be a Dirichlet character which is congruent to $\varepsilon \mod v$. Then there exists a normalized eigenform $f'\in S_{k}(\varGamma_{1}(N),\varepsilon')$ such that $\rho_{ f,\ell}$ and $\rho_{f',\ell}$ are isomorphic.
\end{theorem}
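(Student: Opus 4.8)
The plan is to translate the statement into a freeness property of an integral Hecke module over the group ring of the $\ell$-part of the diamond group, and then to read off $f'$ from the characteristic-zero fibre of that module. First I would normalize the problem. Regard $\varepsilon$ and $\varepsilon'$ as characters of $(\mathbb{Z}/N\mathbb{Z})^{\ast}$ and decompose $(\mathbb{Z}/N\mathbb{Z})^{\ast}=G_{\ell'}\times G_{\ell}$ into its prime-to-$\ell$ part and its $\ell$-Sylow subgroup. The hypothesis $\varepsilon\equiv\varepsilon'\bmod v$ means the character $\psi:=\varepsilon'\varepsilon^{-1}$ takes values that are roots of unity congruent to $1$ modulo $v$; applying Lemma \ref{lem:distinctrootsofunity} to the finitely many prime-to-$\ell$ roots of unity in the image of $\psi$ forces $\psi$ to have $\ell$-power order. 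Hence the restrictions of $\varepsilon$ and $\varepsilon'$ to $G_{\ell'}$ coincide, and only their restrictions $\varepsilon_{\ell}$, $\varepsilon'_{\ell}$ to $G_{\ell}$ (both trivial mod $v$) may differ. It therefore suffices to produce an eigenform of weight $k$ and level $N$ whose diamond eigenvalues are $\varepsilon'_{\ell'}$ on $G_{\ell'}$ and $\varepsilon'_{\ell}$ on $G_{\ell}$ and whose attached mod $\ell$ representation is $\rho_{f,\ell}$.

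Next I would set up the module. Let $M$ be the natural $\overline{\mathbb{Z}}_{\ell}$-lattice carrying commuting actions of the Hecke algebra $\mathbb{T}$ and of $(\mathbb{Z}/N\mathbb{Z})^{\ast}$ via diamond operators: for $k=2$ the $\ell$-adic Tate module of the Jacobian of $X_{1}(N)$, and in general the corresponding \'etale cohomology of $X_{1}(N)$ with the weight-$k$ coefficient sheaf (or Betti modular symbols). Since $\overline{\mathbb{Z}}_{\ell}[G_{\ell'}]$ is \'etale over $\overline{\mathbb{Z}}_{\ell}$, the module $M$ splits along the characters of $G_{\ell'}$; let $M'$ be the summand on which $G_{\ell'}$ acts through $\varepsilon_{\ell'}$. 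Then $M'$ is free over $\overline{\mathbb{Z}}_{\ell}$ and is a module over $\Lambda:=\overline{\mathbb{Z}}_{\ell}[G_{\ell}]$ commuting with the Hecke action, and $\Lambda$ is local with residue field $\overline{\mathbb{F}}_{\ell}$ because $G_{\ell}$ is an $\ell$-group. Let $\mathfrak{m}\subseteq\mathbb{T}$ be the maximal ideal attached to $\rho_{f,\ell}$; since $f$ contributes to $M'$, Nakayama gives $M'_{\mathfrak{m}}\neq 0$.

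The heart of the argument — and the step I expect to be the main obstacle — is to show that $M'_{\mathfrak{m}}$ is \emph{free} as a $\Lambda$-module. This is exactly where the irreducibility of $\rho_{f,\ell}$ must be used: it puts us in the regime where the multiplicity-one and Gorenstein-type results for $\mathbb{T}_{\mathfrak{m}}$ (following Mazur, Ribet, Boston--Lenstra--Ribet, Gross, Edixhoven--Buzzard) are available, and these can be leveraged to show that localizing at $\mathfrak{m}$ removes the contribution of the elliptic points of $X_{1}(N)$ that obstructs freeness of the diamond action, so that $M'_{\mathfrak{m}}$ becomes $\Lambda$-free of some rank $r\geq 1$. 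The same obstacle appears in the twisting picture: since $\psi$ has $\ell$-power order it has a unique square root $\psi^{1/2}$ of $\ell$-power order (as $2$ is invertible modulo the order of $\psi$), the twist $f\otimes\psi^{1/2}$ has nebentypus $\varepsilon\psi=\varepsilon'$ and satisfies $\rho_{f\otimes\psi^{1/2},\ell}\cong\rho_{f,\ell}\otimes\overline{\psi^{1/2}}=\rho_{f,\ell}$ because $\overline{\psi^{1/2}}$ is trivial; the difficulty is then to show that such a twist can be taken of level exactly $N$, which one again has to extract from the fact that the prime-to-$\ell$ conductor of $\rho_{f,\ell}$ divides $N$.

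Granting the freeness, the proof concludes quickly. Inverting $\ell$ gives $M'_{\mathfrak{m}}\otimes\overline{\mathbb{Q}}_{\ell}$ free over $\Lambda\otimes\overline{\mathbb{Q}}_{\ell}\cong\prod_{\psi}\overline{\mathbb{Q}}_{\ell}$, the product running over all characters $\psi$ of $G_{\ell}$; in particular the $\psi=\varepsilon'_{\ell}$ component is nonzero, so it contains a Hecke eigenform $f'$ of weight $k$ and level $N$ with diamond eigenvalues $\varepsilon'_{\ell'}$ on $G_{\ell'}$ and $\varepsilon'_{\ell}$ on $G_{\ell}$, that is, $f'\in S_{k}(\varGamma_{1}(N),\varepsilon')$ after normalization (cuspidal, since it lies in the cusp-forms part of $M$, and genuinely an eigenform for all Hecke operators by the Deligne--Serre lifting lemma if one needs to upgrade from the anemic algebra). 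Because $M'_{\mathfrak{m}}$ was localized at $\mathfrak{m}$, the system of Hecke eigenvalues of $f'$ reduces modulo $v$ to that of $f$; hence $\rho_{f',\ell}$ and $\rho_{f,\ell}$ are semisimple two-dimensional representations with the same characteristic polynomials on $Frob_{p}$ for almost all $p$, and by Chebotarev together with Brauer--Nesbitt they are isomorphic. This yields the desired eigenform $f'$.
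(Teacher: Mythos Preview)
The paper does not prove Carayol's Lemma at all: its entire ``proof'' is the single line ``See \cite[Proposition 3]{carayol},'' and the text explicitly says the result is stated without proof. So there is nothing to compare your argument against within the paper itself.

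That said, your sketch is a faithful outline of how Carayol's original argument actually goes. The reduction step---observing via Lemma~\ref{lem:distinctrootsofunity} that $\psi=\varepsilon'\varepsilon^{-1}$ must have $\ell$-power order, so that only the restriction to the $\ell$-Sylow $G_\ell$ of the diamond group is at issue---is correct and is exactly how one begins. Your identification of the crux is also right: the substance of Carayol's Proposition~3 is precisely the freeness of the localized cohomology module over $\Lambda=\overline{\mathbb{Z}}_\ell[G_\ell]$, and this is where irreducibility of $\rho_{f,\ell}$ enters (to rule out Eisenstein phenomena and invoke the Mazur--Ribet--Gross circle of results). Once freeness is in hand, your extraction of $f'$ from the $\varepsilon'_\ell$-component of the generic fibre and the Chebotarev/Brauer--Nesbitt conclusion are standard and correct.

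One small caution: the alternative twisting approach you mention (via $\psi^{1/2}$) is a useful heuristic but, as you note, does not by itself control the level; Carayol's freeness argument is what guarantees that $f'$ can be taken at level exactly $N$. So your proposal is correct in shape, but you should be aware that what you have written is a proof \emph{plan} rather than a proof---the freeness step you flag as ``the main obstacle'' is genuinely nontrivial and is the content of the cited reference.
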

\begin{proof}
	See \cite[Proposition 3]{carayol}.
\end{proof}

Now we can show 
\begin{theorem}   \label{thm:mainforM}
	Let $\ell\ge 5$ be a prime number, $N>0$ an integer prime to $\ell$, and $k> 2$. Let $f\in S_{k}(\varGamma_{1}(N), \varepsilon)$ be a normalized eigenform and $\lambda$ be a prime of $K_f$ lying over $\ell$. Suppose the representation $\rho_{f,\lambda}$ is irreducible. Then there exist integers $i$ with $0\le i \le \ell-1$ and $M$ with $M|N\ell$,  a newform $f_2\in S_2(\varGamma_{H}(M))$, and a prime $\lambda_2$ lying over $\ell$  in the field $K_{f_2}$, such that $\rho_{f,\lambda}$ is isomorphic to $\rho_{f_2,\lambda_2}\otimes \chi^{i}_{\ell}$. Here  $H=\{x \ (\mathrm{mod} \  M) \ | \  \mathrm{gcd}(x,N\ell)=1 \  \mathrm{with} \ 0<x<N\ell \  \mathrm{and} \ \varepsilon(x) x^{k-2-2i} \equiv1\mod \lambda \}$.
\end{theorem}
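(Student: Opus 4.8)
My plan is to reduce to a weight-two situation in two stages, and then to fit the nebentypus of the resulting form to the congruence subgroup $\varGamma_H(M)$ by means of Carayol's Lemma and the Teichm\"uller lifting. Since $f$ is normalized, $a_1(f)=1\not\equiv 0\bmod\lambda$, so Theorem~\ref{thm:twist} applies and yields an integer $i$ with $0\le i\le\ell-1$, an integer $k'$ with $2\le k'\le\ell+1$, a divisor $M_0\mid N$, a newform $g\in S_{k'}(\varGamma_1(M_0),\varepsilon_g)$ and a prime $\mathfrak l$ of $K_g$ over $\ell$ with $\rho_{f,\lambda}\cong\rho_{g,\mathfrak l}\otimes\chi_\ell^{i}$, the character $\varepsilon$ being induced by $\varepsilon_g$. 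Here $\rho_{g,\mathfrak l}\cong\rho_{f,\lambda}\otimes\chi_\ell^{-i}$ is again irreducible, and the inequality $\ell\ge k'-1$ just obtained is precisely what replaces the assumption $\ell\ge k-1$ of \cite{tian}.

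The core of the argument is to realize $\rho_{g,\mathfrak l}$ in weight $2$. Since $2\le k'\le\ell+1$ and $\rho_{g,\mathfrak l}$ is irreducible, the theory of mod $\ell$ modular forms --- trading weight for level at $\ell$, in the spirit of \cite{book}, \cite{ediweight} and the weight-two realization of \cite{tian} --- provides a weight-two eigenform $h\in S_2(\varGamma_1(M_0\ell),\varepsilon_h)$ together with a place over $\ell$ for which $\rho_{h,\ell}$ is isomorphic to $\rho_{g,\mathfrak l}$, possibly after a twist by a power of $\chi_\ell$. Absorbing that power into $i$ and reducing its exponent modulo $\ell-1$ into $[0,\ell-1]$, we obtain
$$\rho_{f,\lambda}\cong\rho_{h,\ell}\otimes\chi_\ell^{i},$$
with $\rho_{h,\ell}$ irreducible and $0\le i\le\ell-1$.

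Next I would correct the nebentypus and pass to a newform. As $\rho_{h,\ell}$ is irreducible, Carayol's Lemma (Theorem~\ref{carayol'slemma}) allows us to replace $h$ by an eigenform $h'\in S_2(\varGamma_1(M_0\ell),\mathrm{T}(\overline{\varepsilon_h}))$ with $\rho_{h',\ell}\cong\rho_{h,\ell}$, where $\mathrm{T}(\overline{\varepsilon_h})$ is a Teichm\"uller lifting of $\overline{\varepsilon_h}$ (Theorem~\ref{prop:teichmullerlifting}), so that $\ker(\mathrm{T}(\overline{\varepsilon_h}))=\ker(\overline{\varepsilon_h})$. Let $f_2$ be the newform, of some level $M\mid M_0\ell$ and hence $M\mid N\ell$, carrying the Hecke eigensystem of $h'$ away from $M_0\ell$, and set $\lambda_2=v\cap K_{f_2}$. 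Since $a_p(f_2)=a_p(h')$ for a set of primes of density one, Chebotarev and the semisimplicity of both representations give $\rho_{f_2,\lambda_2}\cong\rho_{h',\ell}$, whence $\rho_{f,\lambda}\cong\rho_{f_2,\lambda_2}\otimes\chi_\ell^{i}$. Moreover $\mathrm{T}(\overline{\varepsilon_h})$ is induced by the nebentypus $\varepsilon_{f_2}$ of $f_2$; applying \eqref{inducedkernal} (which holds verbatim for $\overline{\mathbb F}_\ell^{*}$-valued characters) to $\varepsilon_{f_2}$ and to its reduction then shows $\ker(\varepsilon_{f_2})=\pi_{M_0\ell,M}(\ker(\overline{\varepsilon_h}))=\ker(\overline{\varepsilon_{f_2}})$, i.e. $\varepsilon_{f_2}$ is a Teichm\"uller lifting of $\overline{\varepsilon_{f_2}}$.

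Finally I would identify $H$. For every prime $p\nmid N\ell$, comparing determinants of Frobenius via \eqref{charpol} (applied to $f$ and to $f_2$) yields $\varepsilon(p)p^{k-1}\equiv p^{2i}\varepsilon_{f_2}(p)p\bmod v$, that is, $\varepsilon_{f_2}(p)\equiv\varepsilon(p)p^{k-2-2i}\bmod v$. By Dirichlet's theorem on primes in arithmetic progressions (and Lemma~\ref{lemmaofnd}, to select representatives prime to $N\ell$), this determines $\overline{\varepsilon_{f_2}}$ on all of $(\mathbb{Z}/M\mathbb{Z})^{*}$, namely $\overline{\varepsilon_{f_2}}(x\bmod M)=\varepsilon(x)x^{k-2-2i}\bmod v$ for every $x$ prime to $N\ell$; in particular this quantity depends only on $x\bmod M$, and its kernel is exactly the subgroup $H$ of the statement. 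Combined with the previous paragraph, $\ker(\varepsilon_{f_2})=H$, so $\varepsilon_{f_2}$ is trivial on $H$ and $f_2\in S_2(\varGamma_H(M))$, as required. The only substantive step is the weight-two realization of the second paragraph --- producing $h$ with control on its nebentypus and, above all, with level dividing $M_0\ell$; the rest is bookkeeping, requiring care only in tracking, via Theorem~\ref{thm:kandi}, how the twist exponent and the weight interact modulo $\ell-1$, so that the exponent $k-2-2i$ appearing in $H$ comes out correctly.
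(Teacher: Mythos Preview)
Your proposal is correct and follows essentially the same route as the paper: apply Theorem~\ref{thm:twist} to reduce to weight $k'\le\ell+1$, realize $\rho_{g,\mathfrak l}$ in weight $2$ at level $M_0\ell$, invoke Carayol's Lemma with a Teichm\"uller lifting of the reduced nebentypus, pass to the associated newform via Ribet, and identify $H$ as the kernel of $\overline{\varepsilon_{f_2}}$ by a determinant comparison. The only place where the paper is sharper is the weight-two step you flag as ``the only substantive step'': the paper invokes \cite[Proposition~9.3]{gross} directly, which gives $\rho_{g,\mathfrak l}\cong\rho_{g_2,\mathfrak l_2}$ with \emph{no} extra cyclotomic twist (so the absorption you allow for is unnecessary and $i$ stays fixed from Theorem~\ref{thm:twist}), and your determinant identification via Dirichlet/Chebotarev is replaced in the paper by the explicit chain of equalities $\overline{\psi}_{ind}=\bar\varepsilon\chi_\ell^{k-2-2i}$ coming from \eqref{charpol}.
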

\begin{proof}
	 Let $v$  be a place dividing $\lambda$ of $\overline{\mathbb{Q}}$. By Theorem \ref{thm:twist},  there exist $i$ and $k'$ with $0\le i\le \ell-1, \ k'\le \ell+1$,  a newform $g\in S_{k'}(\varGamma_{1}(M_{1}),\varepsilon)$,  and a prime   $\mathfrak{l}$ of $K_g$ lying over $\ell$, such that $\rho_{f,\lambda}$ is isomorphic to $\rho_{g,\mathfrak{l}}\otimes \chi^{i}_{\ell}$. Then  by (\ref{charpol}) we have the equality in $\overline{\mathbb{F}}$:
	\begin{equation} \label{det1}
		\chi_{\ell}^{k-1}=\chi_{\ell}^{k'-1+2i}.
	\end{equation}
	It follows from  \cite[Proposition 9.3]{gross} that there exist a newform $g_{2}\in S_{2}(\varGamma_{1}(M_{1}\ell),\psi)$ for some integer $M_{1}|N$ and a prime $\mathfrak{l}_{2}|\ell$, such that $\rho_{g,\mathfrak{l}}$ is isomorphic to $\rho_{g_{2},\mathfrak{l}_{2}}$. Again by (\ref{charpol}) we have the equality in $\overline{\mathbb{F}}$:
	\begin{equation} \label{det2}
		\overline{\psi}_{ind}\chi_{\ell}=\bar\varepsilon\chi_{\ell}^{k'-1},
	\end{equation}
    where $\psi_{ind}$ is the induced character mod $N\ell$ by $\psi$ 	and	 the bar denotes reduction modulo $v$.
	Therefore we have that   $\rho_{f,\lambda}$ is isomorphic to $\rho_{g_2,\mathfrak{l}_2}\otimes \chi^{i}_{\ell}$ and it follows from  (\ref{det1}) and (\ref{det2}) that 
	\begin{equation} \label{det}
		\overline{\psi}_{ind}=\bar\varepsilon\chi_{\ell}^{k-2-2i}. 
	\end{equation}
	Let $\mathrm{T}(\overline{\psi})$ be a Teichm\"uller lifting of $\overline{\psi}$ as in Definition \ref{def:teichmullerlifting}. By Theorem \ref{carayol'slemma}, we have a normalized eigenform $f_{2}\in S_{2}(\varGamma_{1}(M_{1}\ell),\mathrm{T}(\overline{\psi}))$, and a prime $\lambda_2$ lying over $\ell$  in the field $K_{f_2}$, such that $\rho_{f_2,\lambda_2}$ is isomorphic to $\rho_{g_2,\mathfrak{l}_2}$.
	By \cite[Theorem 1.2]{ribetnebentypus}, we can take $f_2$ to be a newform in $S_{2}(\varGamma_{1}(M),\varepsilon_{2})$, where  $M|M_{1}\ell$ is an integer and  $\varepsilon_{2}$ is a Dirichlet character modulo $M$ that induces $\mathrm{T}(\overline{\psi})$. 
	Thus we obtain a newform  $f_{2}\in S_{2}(\varGamma_{1}(M),\varepsilon_{2})$ such that $\rho_{f,\lambda}$ is isomorphic to $\rho_{f_2,\lambda_2}\otimes \chi^{i}_{\ell}$.
	
	Now we prove that $f_{2}$ is a newform on $\varGamma_{H}(M)$. Since  $\varepsilon_{2}$  induces the Teichm\"uller lifting  $\mathrm{T}(\overline{\psi})$, it follows from (\ref{inducedkernal}) and (\ref{det}) that 
	$H= \{x \ (\mathrm{mod} \ M) \ | \  \mathrm{gcd}(x,N\ell)=1 \  \mathrm{with} \ 0<x<N\ell \  \mathrm{and} \ \varepsilon (x) x^{k-2-2i} \equiv1\mod \lambda\} = \pi_{N\ell,M}(\mathrm{ker}(\bar\varepsilon\chi_{\ell}^{k-2-2i})) = \pi_{N\ell,M}(\mathrm{ker}(\overline{\psi}_{ind})) =\pi_{M_{1}\ell,M}(\mathrm{ker}(\overline{\psi}))  =  \mathrm{ker}(\varepsilon_{2})$.
	
	Note $H$ is a normal subgroup of  $(\mathbb{Z}/M\mathbb{Z})^{\ast}$.  It is evident that ker$(\varphi_{M})\subseteq\varGamma_H(M)$.
	Moreover, for any $\gamma=\begin{pmatrix} a&b\\ c&d\end{pmatrix} \in \varGamma_{H}(M)$, we have that $\varphi_{M}(\gamma)\in H=$ ker$(\varepsilon_{2})$ and thus $f_2|_{2}\gamma=\varepsilon_{2}(\varphi_{M}(\gamma))\cdot f_2=f_2$, which implies $f_{2} \in S_{2}(\varGamma_{H}(M))$.
\end{proof}

In Theorem \ref{thm:mainforM}, the form $f_2$ is a newform, but its level involves a divisor $M$ of $N\ell$. Note that the form $f_2$ can naturally seen as a normalized eigenform which has level $N\ell$. In the following corollary, we give a method to compute the congruence subgroup $\varGamma_H$ of level $N\ell$ on which  $f_2$ is an eigenform, but not necessarily a newform.
\begin{theorem}   \label{thm:main}
	Let $\ell\ge 5$ be a prime number, $N>0$ an integer prime to $\ell$, and $k>2$. Let $f\in S_{k}(\varGamma_{1}(N), \varepsilon)$ be a normalized eigenform and $\lambda$ be a prime of $K_f$ lying over $\ell$. Suppose the representation $\rho_{f,\lambda}$ is irreducible. Then there exist an integer $i$ with $0\le i \le \ell-1$, a normalized eigenform $f_2\in S_2(\varGamma_{H})$, and a prime $\lambda_2$ lying over $\ell$  in the field $K_{f_2}$, such that $\rho_{f,\lambda}$ is isomorphic to $\rho_{f_2,\lambda_2}\otimes \chi^{i}_{\ell}$. Here   $H=\{x \  (\mathrm{mod} \ N\ell)  \ | \  \mathrm{gcd}(x,N\ell)=1 \  \mathrm{with} \ 0<x<N\ell \  \mathrm{and} \ \varepsilon(x) x^{k-2-2i} \equiv1\mod \lambda \}$ and $\varGamma_{H}=\varGamma_{H}(N\ell)$.
\end{theorem}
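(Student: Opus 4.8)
The plan is to obtain Theorem~\ref{thm:main} as an immediate consequence of Theorem~\ref{thm:mainforM} by reinterpreting at level $N\ell$ the newform produced there. Applying Theorem~\ref{thm:mainforM} to $f$ yields an integer $i$ with $0\le i\le\ell-1$, a divisor $M$ of $N\ell$, a newform $f_2\in S_2(\varGamma_{H'}(M))$ and a prime $\lambda_2\mid\ell$ of $K_{f_2}$ such that $\rho_{f,\lambda}\cong\rho_{f_2,\lambda_2}\otimes\chi_\ell^{i}$, where $H'$ is the group appearing in Theorem~\ref{thm:mainforM}, namely $H'=\{x\ (\mathrm{mod}\ M)\mid \gcd(x,N\ell)=1,\ 0<x<N\ell,\ \varepsilon(x)x^{k-2-2i}\equiv1\bmod\lambda\}$. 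I would keep exactly this $i$, $f_2$ and $\lambda_2$; the only thing left to check is that $f_2$, viewed as a form of level $N\ell$, lies in $S_2(\varGamma_H)=S_2(\varGamma_H(N\ell))$ for the group $H$ described in the statement.

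The key step is to compare the two groups $H\le(\mathbb{Z}/N\ell\mathbb{Z})^\ast$ and $H'\le(\mathbb{Z}/M\mathbb{Z})^\ast$. Since $M\mid N\ell$ the projection $\pi_{N\ell,M}$ is defined, and because $H$ and $H'$ are cut out by the \emph{same} condition on integers $x$ with $0<x<N\ell$ and $\gcd(x,N\ell)=1$ — differing only in whether one then reduces modulo $N\ell$ or modulo $M$ — one has $\pi_{N\ell,M}(H)=H'$. I would then use this to deduce $\varGamma_H(N\ell)\subseteq\varGamma_{H'}(M)$: for $\gamma\in\varGamma_H(N\ell)=\varphi_{N\ell}^{-1}(H)$ the lower-left entry of $\gamma$ is divisible by $N\ell$, hence by $M$, and $\varphi_M(\gamma)=\pi_{N\ell,M}(\varphi_{N\ell}(\gamma))\in\pi_{N\ell,M}(H)=H'$, so $\gamma\in\varphi_M^{-1}(H')=\varGamma_{H'}(M)$.

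From $\varGamma_H(N\ell)\subseteq\varGamma_{H'}(M)$ it follows that $S_2(\varGamma_{H'}(M))\subseteq S_2(\varGamma_H(N\ell))$: a weight-two cusp form invariant under $\varGamma_{H'}(M)$ is a fortiori invariant under the smaller group $\varGamma_H(N\ell)$, and holomorphy and vanishing at the cusps are preserved because the cusps of $X_{\varGamma_H(N\ell)}$ lie over those of $X_{\varGamma_{H'}(M)}$. Hence $f_2\in S_2(\varGamma_H)$. Moreover $f_2$, being a newform of level $M\mid N\ell$, is an eigenform for every Hecke operator $T_p$ with $p\nmid N\ell$ and for every diamond operator, so — following the convention already used after Theorem~\ref{thm:kandi}, that a newform of level dividing $N\ell$ is naturally a normalized eigenform of level $N\ell$ — it is the normalized eigenform on $\varGamma_H$ required by the statement, and the isomorphism $\rho_{f,\lambda}\cong\rho_{f_2,\lambda_2}\otimes\chi_\ell^{i}$ is inherited verbatim from Theorem~\ref{thm:mainforM}. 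Since the whole argument is a translation of Theorem~\ref{thm:mainforM}, there is no real obstacle: the only points needing a moment's care are the elementary identity $\pi_{N\ell,M}(H)=H'$, the resulting inclusion $\varGamma_H(N\ell)\subseteq\varGamma_{H'}(M)$, and the standard fact that passing to a larger level preserves the cuspidal condition.
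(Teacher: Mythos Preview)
Your proposal is correct and follows essentially the same approach as the paper: both apply Theorem~\ref{thm:mainforM} to obtain $i$, $M\mid N\ell$, and the newform $f_2\in S_2(\varGamma_{H'}(M))$, then verify the inclusion $\varGamma_H(N\ell)\subseteq\varGamma_{H'}(M)$ via the divisibility $M\mid N\ell$ and the compatibility of $H$ and $H'$ under $\pi_{N\ell,M}$, concluding that $f_2\in S_2(\varGamma_H)$. Your version is slightly more explicit in stating $\pi_{N\ell,M}(H)=H'$ and in justifying that cuspidality and the eigenform property survive the passage to level $N\ell$, but the argument is the same.
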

\begin{proof}
	 By Theorem \ref{thm:mainforM}, there exist integers $i$ with $0\le i \le \ell-1$ and $M$ with $M|N\ell$,  a newform $f_2\in S_2(\varGamma_{H'}(M))$, and a prime $\lambda_2$ lying over $\ell$  in the field $K_{f_2}$, such that $\rho_{f,\lambda}$ is isomorphic to $\rho_{f_2,\lambda_2}\otimes \chi^{i}_{\ell}$. Here   $H'=\{x \  (\mathrm{mod} \ M) \ | \  \mathrm{gcd}(x,N\ell)=1 \  \mathrm{with} \ 0<x<N\ell \  \mathrm{and} \ \varepsilon(x) x^{k-2-2i} \equiv1\mod \lambda \}$.
	 
	 For any  $\gamma = \begin{pmatrix}
	 	a & b\\
	 	c & d
	 \end{pmatrix} \in \varGamma_{H}=\varGamma_{H}(N\ell)$, since $M$ is a divisor of $N\ell$, we know that  $c\equiv 0 \pmod M$ and $d \pmod M \in H'$. Then we have $\varGamma_{H}\subseteq \varGamma_{H'}(M)$, which implies $f_{2} \in S_{2}(\varGamma_{H})$. 
	\end{proof}

Note that  in the proofs of Theorem \ref{thm:mainforM} and \ref{thm:main}, the integer $i$ is determined by Theorem \ref{thm:twist}. Consequently, in the case with $\ell\ge k-1$, Theorem \ref{thm:main} boils down to the following corollary.
\begin{corollary} \label{cor:mainforlargel}
	
		Let $\ell\ge 5$ be a prime number, $N>0$ an integer prime to $\ell$, and $k>2$. Let $f\in S_{k}(\varGamma_{1}(N), \varepsilon)$ be a normalized eigenform and $\lambda$ be a prime of $K_f$ lying over $\ell$. Suppose the representation $\rho_{f,\lambda}$ is irreducible and $\ell\ge k-1$. Then there exist  a normalized eigenform $f_2\in S_2(\varGamma_{H})$ and a prime $\lambda_2$ lying over $\ell$  in the field $K_{f_2}$, such that $\rho_{f,\lambda}$ is isomorphic to $\rho_{f_2,\lambda_2}$. Here  $H=\{x \ (\mathrm{mod} \ N\ell)  \ | \  \mathrm{gcd}(x,N\ell)=1 \  \mathrm{with} \ 0<x<N\ell \  \mathrm{and} \ \varepsilon(x) x^{k-2} \equiv1\mod \lambda \}$ and $\varGamma_{H}=\varGamma_{H}(N\ell)$.
\end{corollary}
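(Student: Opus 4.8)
The plan is to obtain this statement as the special case $i = 0$ of Theorem~\ref{thm:main}. First I would apply Theorem~\ref{thm:main} directly: since $\rho_{f,\lambda}$ is irreducible and $k > 2$, it produces an integer $i$ with $0 \le i \le \ell - 1$, a normalized eigenform $f_2 \in S_2(\varGamma_H)$, and a prime $\lambda_2 \mid \ell$ of $K_{f_2}$ with $\rho_{f,\lambda} \cong \rho_{f_2,\lambda_2} \otimes \chi^i_\ell$, where $\varGamma_H = \varGamma_H(N\ell)$ and $H = \{ x \bmod N\ell : \gcd(x, N\ell) = 1,\ 0 < x < N\ell,\ \varepsilon(x) x^{k-2-2i} \equiv 1 \bmod \lambda \}$. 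The corollary then follows once I show that under the extra hypothesis $\ell \ge k - 1$ the integer $i$ may be taken to be $0$.

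To pin down $i = 0$, recall that, as noted just before Algorithm~\ref{alg:twistforsmalll}, the integer $i$ entering Theorems~\ref{thm:mainforM} and~\ref{thm:main} is the one supplied by Theorem~\ref{thm:twist}, and that when $\ell \ge k - 1$ --- equivalently $k \le \ell + 1$ --- no weight lowering is needed, so one may take $i = 0$ and $k' = k$ in Theorem~\ref{thm:twist}, with $g$ the newform underlying $f$ (note $a_1(f) = 1 \not\equiv 0 \bmod \lambda$ automatically, since $f$ is normalized). I would then rerun the proof of Theorem~\ref{thm:mainforM} with this choice: Theorem~\ref{thm:twist} with $i = 0$ gives a newform $g$ of weight $k \le \ell + 1$ with $\rho_{f,\lambda} \cong \rho_{g,\mathfrak{l}}$; \cite[Proposition 9.3]{gross} produces a weight-$2$ newform $g_2$ with $\rho_{g,\mathfrak{l}} \cong \rho_{g_2,\mathfrak{l}_2}$; Carayol's Lemma (Theorem~\ref{carayol'slemma}) replaces its character by a Teichm\"uller lifting; \cite[Theorem 1.2]{ribetnebentypus} realizes the outcome as a newform $f_2$ of a level $M \mid M_1 \ell$; and the kernel computation of Theorem~\ref{thm:mainforM}, now with $i = 0$, identifies $f_2$ as an eigenform on $\varGamma_{H'}(M)$ with $H' = \{ x : \varepsilon(x) x^{k-2} \equiv 1 \bmod \lambda \}$; finally the inclusion $\varGamma_H(N\ell) \subseteq \varGamma_{H'}(M)$ from the proof of Theorem~\ref{thm:main} gives $f_2 \in S_2(\varGamma_H)$.

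It then remains to substitute $i = 0$ into the conclusion of Theorem~\ref{thm:main}. Since $\chi^0_\ell$ is the trivial character, $\rho_{f_2,\lambda_2} \otimes \chi^0_\ell = \rho_{f_2,\lambda_2}$, so $\rho_{f,\lambda} \cong \rho_{f_2,\lambda_2}$; and the exponent $k - 2 - 2i$ in the description of $H$ becomes $k - 2$, producing exactly the subgroup $H$ and congruence subgroup $\varGamma_H = \varGamma_H(N\ell)$ in the statement. The only step that calls for genuine care --- the main obstacle, such as it is --- is verifying that $i = 0$ really is an admissible output of Theorem~\ref{thm:twist} when $\ell \ge k - 1$: this must be checked against the precise formulation of \cite[Theorem 3.4]{ediweight}, confirming that in the range $k \le \ell + 1$ no twist is forced and $g$ can be taken with unchanged character. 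Everything else is a transcription of the arguments in Theorems~\ref{thm:mainforM} and~\ref{thm:main} specialized to $i = 0$.
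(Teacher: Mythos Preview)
Your proposal is correct and follows essentially the same approach as the paper: observe that when $\ell \ge k-1$ the integer $i$ in Theorem~\ref{thm:twist} can be taken to be $0$, and then Theorem~\ref{thm:main} with $i=0$ yields the corollary verbatim. The paper's proof is just these two sentences; your detour through rerunning the argument of Theorem~\ref{thm:mainforM} step by step is not needed, since Theorem~\ref{thm:main} already packages that chain of implications and can be invoked directly once $i=0$ is justified.
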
  
\begin{proof}
	If $\ell\ge k-1$, the integer $i$ in Theorem \ref{thm:twist} can be taken to be $0$. Therefore, in Theorem \ref{thm:main} we have $i=0$ and $H=\{x \ (\mathrm{mod} N\ell)  \ | \  \mathrm{gcd}(x,N\ell)=1 \  \mathrm{with} \ 0<x<N\ell \  \mathrm{and} \ \varepsilon(x) x^{k-2} \equiv1\mod \lambda \}$. 
\end{proof}

	The following theorem  shows that the congruence subgroup $\varGamma_{H}$ in Theorem \ref{thm:main} is in fact the largest possible congruence subgroup with $\varGamma_{1}(N\ell)\subseteq\varGamma_{H}\subseteq\varGamma_{0}(N\ell)$, on which such eigenform $f_2$ exists.

\begin{theorem}   \label{thm:largest} 
	Let $\ell\ge 5$ be a prime number, $N>0$ an integer prime to $\ell$, and $k> 2$. Let $f\in S_{k}(\varGamma_{1}(N), \varepsilon)$ be a normalized eigenform  and $\lambda$ be a prime of $K_f$ lying over $\ell$. Suppose the representation $\rho_{f,\lambda}$ is irreducible. Suppose we have a normalized eigenform $g_2\in S_2(\varGamma)$  with $\rho_{f,\ell}\cong\rho_{g_2,\ell}\otimes \chi^{i}_{\ell}$ for some integer $i$ with $0\le i \le \ell-1$, and congruence subgroup $\varGamma$ with $\varGamma_{1}(N\ell)\subseteq\varGamma\subseteq\varGamma_{0}(N\ell)$. Let $H=\{x \ (\mathrm{mod} N\ell)  \ | \  \mathrm{gcd}(x,N\ell)=1 \  \mathrm{with} \ 0<x<N\ell \  \mathrm{and} \ \varepsilon(x) x^{k-2-2i} \equiv1\mod \lambda \}$ and $\varGamma_{H}=\varGamma_{H}(N\ell)$. Then we have $\varGamma\subseteq\varGamma_{H}$.
	
	Moreover, there exists a normalized eigenform $f_{2}\in S_2(\varGamma_H)$ such that $\rho_{f,\ell}$ is isomorphic to $\rho_{f_2,\ell}\otimes \chi^{i}_{\ell}$.
\end{theorem}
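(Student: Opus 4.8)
The plan is to reduce the containment $\varGamma\subseteq\varGamma_H$ to an inclusion of subgroups of $(\mathbb{Z}/N\ell\mathbb{Z})^{*}$ and then to identify $H$ as the kernel of an explicit Dirichlet character produced by a determinant computation. Since $\varGamma_1(N\ell)=\ker\varphi_{N\ell}$ and $\varGamma_1(N\ell)\subseteq\varGamma\subseteq\varGamma_0(N\ell)$, the group $\varGamma$ equals $\varphi_{N\ell}^{-1}(H')=\varGamma_{H'}(N\ell)$ with $H'=\varphi_{N\ell}(\varGamma)$, so it suffices to prove $H'\subseteq H$. As a first step I would attach a nebentypus to $g_2$: being a normalized eigenform, $g_2$ is in particular a simultaneous eigenform for the diamond operators on $S_2(\varGamma_1(N\ell))$, hence lies in $S_2(\varGamma_1(N\ell),\psi)$ for a Dirichlet character $\psi$ modulo $N\ell$; and since $g_2\in S_2(\varGamma)=S_2(\varGamma_{H'}(N\ell))$ is fixed by $\langle d\rangle$ for every $d\in H'$, the character $\psi$ is trivial on $H'$, i.e. $H'\subseteq\ker\psi\subseteq\ker\bar\psi$, the bar being reduction modulo $v$.

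Next I would compare determinants. By Theorem \ref{thm:deligne}, as $\overline{\mathbb{F}}_\ell$-valued characters of $\mathrm{Gal}(\overline{\mathbb{Q}}|\mathbb{Q})$ (equivalently of $(\mathbb{Z}/N\ell\mathbb{Z})^{*}$ via class field theory) one has $\det\rho_{f,\lambda}=\bar\varepsilon\,\chi_\ell^{\,k-1}$ and $\det\rho_{g_2,\ell}=\bar\psi\,\chi_\ell$, where $\bar\varepsilon$ denotes the reduction modulo $v$ of the character of $f$ induced to modulus $N\ell$. Taking determinants in $\rho_{f,\ell}\cong\rho_{g_2,\ell}\otimes\chi_\ell^{\,i}$, and using that $\rho_{g_2,\ell}$ is $2$-dimensional, gives $\bar\varepsilon\,\chi_\ell^{\,k-1}=\bar\psi\,\chi_\ell^{\,1+2i}$, hence $\bar\psi=\bar\varepsilon\,\chi_\ell^{\,k-2-2i}$. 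Unwinding the identification $\chi_\ell(x)\equiv x\bmod v$ for $x$ prime to $N\ell$, one checks $(\bar\varepsilon\,\chi_\ell^{\,k-2-2i})(x)=\overline{\varepsilon(x)\,x^{\,k-2-2i}}$, so the explicit set $H$ in the statement is exactly $\ker(\bar\varepsilon\,\chi_\ell^{\,k-2-2i})=\ker\bar\psi$. Combined with $H'\subseteq\ker\bar\psi$ from the first paragraph, this gives $H'\subseteq H$, i.e. $\varGamma\subseteq\varGamma_H$.

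For the ``moreover'' part I would imitate the final part of the proof of Theorem \ref{thm:mainforM}, starting directly from $g_2$ and the integer $i$ already in hand rather than invoking Theorem \ref{thm:twist}. Since $\rho_{g_2,\ell}\cong\rho_{f,\ell}\otimes\chi_\ell^{-i}$ is irreducible, Carayol's Lemma (Theorem \ref{carayol'slemma}) applied to $g_2\in S_2(\varGamma_1(N\ell),\psi)$ and a Teichm\"uller lifting $\mathrm{T}(\bar\psi)$ of $\bar\psi$ --- which by Theorem \ref{prop:teichmullerlifting} satisfies $\mathrm{T}(\bar\psi)\equiv\psi\bmod v$ and $\ker\mathrm{T}(\bar\psi)=\ker\bar\psi=H$ --- yields a normalized eigenform $f_2\in S_2(\varGamma_1(N\ell),\mathrm{T}(\bar\psi))$ with $\rho_{g_2,\ell}\cong\rho_{f_2,\ell}$, hence $\rho_{f,\ell}\cong\rho_{f_2,\ell}\otimes\chi_\ell^{\,i}$. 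As $\mathrm{T}(\bar\psi)$ is trivial on $H$, the computation $f_2|_{2}\gamma=\mathrm{T}(\bar\psi)(\varphi_{N\ell}(\gamma))\,f_2=f_2$ for $\gamma\in\varGamma_H$ shows $f_2\in S_2(\varGamma_H)$.

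I expect the only delicate point to be the bookkeeping with the cyclotomic character: one must make sure that $\chi_\ell$, viewed as a Dirichlet character modulo $N\ell$, is precisely $x\mapsto(x\bmod\ell)$, and that its reduction modulo $v$ (resp. $\lambda$) is compatible with reduction of the integer $x$, so that the set $H$ as written in the statement really coincides with $\ker(\bar\varepsilon\,\chi_\ell^{\,k-2-2i})$. Once this normalization is pinned down, every remaining step is a transcription of arguments already carried out for Theorems \ref{thm:twist}, \ref{thm:mainforM} and \ref{thm:main}.
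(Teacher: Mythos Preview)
Your proposal is correct and follows essentially the same approach as the paper's proof: attach a nebentypus $\psi$ to $g_2$ on $\varGamma_1(N\ell)$, observe that $\varphi_{N\ell}(\varGamma)\subseteq\ker\psi\subseteq\ker\bar\psi$, compare determinants via \eqref{charpol} to identify $\bar\psi=\bar\varepsilon\,\chi_\ell^{\,k-2-2i}$ and hence $H=\ker\bar\psi$, and then apply Carayol's Lemma with a Teichm\"uller lifting of $\bar\psi$ for the ``moreover'' part. Your explicit introduction of $H'=\varphi_{N\ell}(\varGamma)$ is a notational convenience rather than a substantive difference.
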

\begin{proof}
     Since  $g_2\in S_2(\varGamma)$ and  $\varGamma_{1}(N\ell) \subseteq \varGamma$, the form $g_2$ can be naturally seen as a form on $\varGamma_{1}(N\ell)$ with a  modulo $N\ell$ nebentypus character $\psi$.

    Let $\varphi_{N\ell}$ denote the surjection:
    \begin{displaymath}
    \varphi_{N\ell} :  \ \varGamma_{0}(N\ell) \twoheadrightarrow (\mathbb{Z}/N\ell\mathbb{Z})^{\ast} , \ \ \ \ \ \ \
    \begin{pmatrix} a & b\\ c & d \end{pmatrix} \rightarrow d \ (\mathrm{mod} \ N\ell).
    \end{displaymath}
     For any  $\gamma  \in \varGamma\subseteq\varGamma_{0}(N\ell)$, we have that $g_2=g_2|_{2}\gamma=\psi_{}(\varphi_{N\ell}(\gamma))\cdot g_2$, which implies that $\varphi_{N\ell}(\gamma)\in\mathrm{ker}(\psi_{})$.
    
    Since $\rho_{f,\ell}\cong \rho_{g_2,\ell}\otimes \chi^{i}_{\ell}$, by (\ref{charpol}) we have the equality in $\overline{\mathbb{F}}$:
   	\begin{displaymath} 
   \overline{\psi}_{}=\bar\varepsilon\chi_{\ell}^{k-2-2i}. 
   \end{displaymath}
    Note  $H$ actually is the kernel of $\bar\varepsilon\chi_{\ell}^{k-2-2i}$.
    It follows that $\varphi_{N\ell}(\gamma)\in\mathrm{ker}(\psi_{})\subseteq\mathrm{ker}(\overline{\psi}_{})=\mathrm{ker}(\bar\varepsilon\chi_{\ell}^{k-2-2i})= H$.  By the definition of $\varGamma_{H}=\varGamma_{H}(N\ell)$, we have $\gamma\in \varGamma_{H}$, and therefore $\varGamma \subseteq \varGamma_{H}$.
    
    Let $\varepsilon_{2}$ be a Teichm\"uller lifting of $\overline{\psi}_{}$ as in Definition \ref{def:teichmullerlifting}. By Theorem \ref{carayol'slemma}, we have a normalized eigenform $f_{2}\in S_{2}(\varGamma_{1}(N\ell),\varepsilon_{2})$ such that $\rho_{f_2,\ell}$ is isomorphic to $\rho_{g_2,\ell}$. Then we know that $\rho_{f,\ell}$ is isomorphic to $\rho_{f_2,\ell}\otimes \chi^{i}_{\ell}$ and it follows 
    \begin{equation} \label{det3}
    \overline{\varepsilon}_{2} =\bar\varepsilon\chi_{\ell}^{k-2-2i}. 
    \end{equation}
    
    Now we show  $f_{2}\in S_2(\varGamma_H)$.
    Since  $\varepsilon_{2}$ is a Teichm\"uller lifting of $\overline{\psi}_{}$, it follows from  (\ref{det3}) that  ker$(\varepsilon_{2})=\mathrm{ker}(\bar\varepsilon_{2})= \mathrm{ker}(\bar\varepsilon\chi_{\ell}^{k-2-2i})=H$.  Then for any $\gamma=\begin{pmatrix} a&b\\ c&d\end{pmatrix} \in \varGamma_{H}$, we have  $\varphi_{N\ell}(\gamma)\in\mathrm{ker}(\varepsilon_{2})$ and thus $f_2|_{2}\gamma=\varepsilon_{2}(\varphi_{N\ell}(\gamma))\cdot f_2=f_2$, which implies $f_{2}\in S_2(\varGamma_H)$.
\end{proof}

If the form $f$ has weight $2$, we have the following results.
    
\begin{theorem} \label{thm:mainfork=2}   
	Let $\ell\ge 5$ be a prime number and $N>0$ an integer prime to $\ell$. Let $f\in S_2(\varGamma_1(N),\varepsilon)$ be a normalized eigenform and $\lambda$ be a prime of $K_f$ lying over $\ell$. Suppose the modular Galois representation $\rho_{f,\lambda}$ is irreducible. Let   $H=\mathrm{ker}(\bar\varepsilon)$ be the kernel of the reduction of $\varepsilon$ modulo $\lambda$ and   $\varGamma_{H}=\varGamma_{H}(N)$. Then there exists a normalized eigenform $f_2 \in S_2(\varGamma_H)$ such that $\rho_{f,\ell}\cong \rho_{f_2,\ell}$.
	
	Moreover,  the group  $\varGamma_{H}$ is the largest possible congruence subgroup with $\varGamma_{1}(N)\subseteq\varGamma_{H}\subseteq\varGamma_{0}(N)$, on which such  eigenform $f_2$ exists. 
\end{theorem}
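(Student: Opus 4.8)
The plan is to read this statement as the weight-two counterpart of Theorems~\ref{thm:main} and~\ref{thm:largest}, where the argument simplifies considerably: since $f$ already has weight $2$, there is no twisting and no weight reduction to perform (one takes $i=0$), and moreover $N'=N$, so everything collapses to a single application of Carayol's Lemma followed by an elementary determinant computation. Throughout I would fix a place $v$ of $\overline{\mathbb{Q}}$ dividing $\lambda$ and write $\bar{\ }$ for reduction modulo $v$, so that $\bar\varepsilon$ is as in the statement.

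\emph{Existence of $f_2$.} First I would let $\mathrm{T}(\bar\varepsilon)$ be a Teichm\"uller lifting of $\bar\varepsilon$ as in Definition~\ref{def:teichmullerlifting}, so that by Theorem~\ref{prop:teichmullerlifting} one has $\mathrm{T}(\bar\varepsilon)\equiv\varepsilon\bmod v$ and $\mathrm{ker}(\mathrm{T}(\bar\varepsilon))=\mathrm{ker}(\bar\varepsilon)=H$. Since $\rho_{f,\lambda}$ is irreducible, Carayol's Lemma (Theorem~\ref{carayol'slemma}) applied with $\varepsilon'=\mathrm{T}(\bar\varepsilon)$ produces a normalized eigenform $f_2\in S_2(\varGamma_1(N),\mathrm{T}(\bar\varepsilon))$ with $\rho_{f,\ell}\cong\rho_{f_2,\ell}$. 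To promote $f_2$ to a form on $\varGamma_H=\varGamma_H(N)$, I would observe that every $\gamma\in\varGamma_H(N)$ satisfies $\varphi_N(\gamma)\in H=\mathrm{ker}(\mathrm{T}(\bar\varepsilon))$, whence $f_2|_2\gamma=\mathrm{T}(\bar\varepsilon)(\varphi_N(\gamma))\cdot f_2=f_2$, so $f_2\in S_2(\varGamma_H)$. It is worth stressing that this step genuinely needs Carayol: the original form $f$ itself need not lie in $S_2(\varGamma_H)$, since $\varepsilon$ need not be trivial on $\mathrm{ker}(\bar\varepsilon)$ even though its Teichm\"uller lift is.

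\emph{Maximality of $\varGamma_H$.} Now suppose $g_2\in S_2(\varGamma)$ is a normalized eigenform with $\rho_{f,\ell}\cong\rho_{g_2,\ell}$ and $\varGamma_1(N)\subseteq\varGamma\subseteq\varGamma_0(N)$. Because $\varGamma_1(N)\subseteq\varGamma$, the form $g_2$ carries a nebentypus character $\psi$ modulo $N$, and for every $\gamma\in\varGamma\subseteq\varGamma_0(N)$ the identity $g_2=g_2|_2\gamma=\psi(\varphi_N(\gamma))\cdot g_2$ forces $\varphi_N(\gamma)\in\mathrm{ker}(\psi)$. Comparing the determinants of the two isomorphic representations via~(\ref{charpol}), and using that $f$ and $g_2$ both have weight $2$, gives $\bar\varepsilon(p)\,p=\bar\psi(p)\,p$ in $\overline{\mathbb{F}}_\ell$ for every prime $p\nmid N\ell$; since such primes generate $(\mathbb{Z}/N\mathbb{Z})^{\ast}$, this forces $\bar\psi=\bar\varepsilon$, and hence $\mathrm{ker}(\psi)\subseteq\mathrm{ker}(\bar\psi)=\mathrm{ker}(\bar\varepsilon)=H$. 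Therefore $\varphi_N(\gamma)\in H$ for all $\gamma\in\varGamma$, i.e.\ $\varGamma\subseteq\varGamma_H(N)=\varGamma_H$; combined with the existence part this shows $\varGamma_H$ is the largest congruence subgroup between $\varGamma_1(N)$ and $\varGamma_0(N)$ on which such an eigenform lives.

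\emph{Expected main obstacle.} There is no real obstacle here; the proof is a streamlined version of the proofs of Theorems~\ref{thm:main} and~\ref{thm:largest}. The two points that require genuine care are the identity $\mathrm{ker}(\mathrm{T}(\bar\varepsilon))=\mathrm{ker}(\bar\varepsilon)$, which is precisely property~$(2)$ of a Teichm\"uller lifting, and the determinant computation, which is what pins down the reduction of the nebentypus of a weight-two eigenform and makes the inclusion $\varGamma\subseteq\varGamma_H$ go through. One should also make sure that $\bar\varepsilon$ and $\bar\psi$ are compared as characters of the same group $(\mathbb{Z}/N\mathbb{Z})^{\ast}$, which is automatic here because neither the level nor the modulus changes --- in contrast with the $k>2$ case of Theorem~\ref{thm:largest}, where the analogous comparison must pass through induced characters of modulus $N\ell$.
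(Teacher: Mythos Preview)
Your proposal is correct and follows essentially the same approach as the paper: apply Carayol's Lemma with the Teichm\"uller lift of $\bar\varepsilon$ to produce $f_2$, then for maximality compare determinants via~(\ref{charpol}) to get $\bar\psi=\bar\varepsilon$ and conclude $\mathrm{ker}(\psi)\subseteq\mathrm{ker}(\bar\psi)=H$. The paper's proof is terser---in the existence part it just says ``the existence of $f_2$ just follows from Theorem~\ref{carayol'slemma}'' without spelling out why $f_2\in S_2(\varGamma_H)$---so your added verification that $f_2|_2\gamma=f_2$ for $\gamma\in\varGamma_H$ is a welcome detail rather than a departure.
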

\begin{proof}
	We take $\varepsilon'$ to be a Teichm\"uller lifting of $\bar\varepsilon$, and the existence of $f_2$ just follows from Theorem \ref{carayol'slemma}. 
	
	 Let $g_2\in S_2(\varGamma)$ be a normalized eigenform, such that $\rho_{f,\ell}\cong\rho_{g_2,\ell}$ for some congruence subgroup $\varGamma$  with $\varGamma_{1}(N)\subseteq\varGamma\subseteq\varGamma_{0}(N)$.  We will show $\varGamma \subseteq \varGamma_{H}(N)$.
	 
	 Let $\psi$ be the nebentypus character of $g_2$.  Let $\varphi_{N}$ denote the surjection:
	 \begin{displaymath}
	 \varphi_{N} :  \ \varGamma_{0}(N) \twoheadrightarrow (\mathbb{Z}/N\mathbb{Z})^{\ast} , \ \ \ \ \ \ \
	 \begin{pmatrix} a & b\\ c & d \end{pmatrix} \rightarrow d \ (\mathrm{mod} \ N).
	 \end{displaymath}
	 For any  $\gamma  \in \varGamma$, we have that $g_2=g_2|_{2}\gamma=\psi(\varphi_{N}(\gamma))\cdot g_2$ and hence $\varphi_{N}(\gamma)\in\mathrm{ker}(\psi)$. 
	 Since $\rho_{f,\ell}\cong\rho_{g_2,\ell}$, by (\ref{charpol}) we have  $\overline{\psi}=\bar\varepsilon$. It follows that $\varphi_{N}(\gamma)\in\mathrm{ker}(\psi)\subseteq\mathrm{ker}(\overline{\psi})=\mathrm{ker}(\bar\varepsilon)= H$.  By the definition of $\varGamma_{H}(N)$, we have $\gamma\in \varGamma_{H}(N)$, and therefore $\varGamma \subseteq \varGamma_{H}(N)$.
\end{proof}

 If we suppose  $f\in S_2(\varGamma_1(N),\varepsilon)$ and gcd$(\ell, \phi(N))=1$, by Lemma \ref{lem:teichmullerliftingforlandphicoprime},  we have  ker$(\bar\varepsilon)=\mathrm{ker}(\varepsilon)$. Therefore  the group $H$ in Theorem \ref{thm:mainfork=2}, which is the kernel  of the reduction  $\bar\varepsilon$, is also the kernel of $\varepsilon$. Then we have
\begin{corollary}  
	Let $\ell\ge 5$ be a prime number and $N>0$ an integer prime to $\ell$. Let $f\in S_2(\varGamma_1(N),\varepsilon)$ be a normalized eigenform. Let $H=\mathrm{ker}(\varepsilon)$ be the kernel of $\varepsilon$  and $\varGamma_{H}=\varGamma_{H}(N)$.  Suppose gcd$(\ell, \phi(N))=1$ and the modular Galois representation $\rho_{f,\lambda}$ is irreducible. Then there exists a normalized eigenform $f_2 \in S_2(\varGamma_H)$ such that $\rho_{f,\ell}\cong \rho_{f_2,\ell}$.
	
	Moreover, the group $\varGamma_{H}$ is the largest possible congruence subgroup with $\varGamma_{1}(N)\subseteq\varGamma_{H}\subseteq\varGamma_{0}(N)$, on which such  eigenform $f_2$ exists. 
\end{corollary}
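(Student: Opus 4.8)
The plan is to deduce the corollary directly from Theorem \ref{thm:mainfork=2}, the only genuinely new input being an identification of the two descriptions of the subgroup $H$. First I would invoke Lemma \ref{lem:teichmullerliftingforlandphicoprime}: since $\mathrm{gcd}(\ell,\phi(N))=1$, the character $\varepsilon$ is itself a Teichm\"uller lifting of its reduction $\bar\varepsilon$, so by condition $(2)$ in Definition \ref{def:teichmullerlifting} we have $\mathrm{ker}(\bar\varepsilon)=\mathrm{ker}(\varepsilon)$. The inclusion $\mathrm{ker}(\varepsilon)\subseteq\mathrm{ker}(\bar\varepsilon)$ is automatic because reduction modulo $\lambda$ is a ring homomorphism; the reverse inclusion is exactly what Lemma \ref{lem:teichmullerliftingforlandphicoprime} provides, via Lemma \ref{lem:distinctrootsofunity} applied to the $\phi(N)$-th roots of unity.

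With this identity in hand, the group $H=\mathrm{ker}(\varepsilon)$ of the present statement coincides with the group $H=\mathrm{ker}(\bar\varepsilon)$ appearing in Theorem \ref{thm:mainfork=2}, hence $\varGamma_H=\varGamma_H(N)$ is literally the same congruence subgroup in both formulations. The existence assertion then follows verbatim from the first part of Theorem \ref{thm:mainfork=2}: choosing $\varepsilon'$ to be a Teichm\"uller lifting of $\bar\varepsilon$ and applying Carayol's Lemma (Theorem \ref{carayol'slemma}) yields a normalized eigenform $f_2\in S_2(\varGamma_1(N),\varepsilon')$; since $\mathrm{ker}(\varepsilon')=\mathrm{ker}(\bar\varepsilon)=H$, the form $f_2$ is invariant under $\varGamma_H(N)$, i.e. $f_2\in S_2(\varGamma_H)$, and $\rho_{f,\ell}\cong\rho_{f_2,\ell}$.

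For the maximality statement I would again just quote the second part of Theorem \ref{thm:mainfork=2}: given a normalized eigenform $g_2\in S_2(\varGamma)$ with $\rho_{f,\ell}\cong\rho_{g_2,\ell}$ and $\varGamma_1(N)\subseteq\varGamma\subseteq\varGamma_0(N)$, write $\psi$ for the nebentypus of $g_2$ viewed on $\varGamma_1(N)$; then $\overline{\psi}=\bar\varepsilon$ by (\ref{charpol}), so for every $\gamma\in\varGamma$ one has $\varphi_N(\gamma)\in\mathrm{ker}(\psi)\subseteq\mathrm{ker}(\overline{\psi})=\mathrm{ker}(\bar\varepsilon)=\mathrm{ker}(\varepsilon)=H$, whence $\gamma\in\varGamma_H(N)$ and therefore $\varGamma\subseteq\varGamma_H(N)$. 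Since all of this has already been established in Theorem \ref{thm:mainfork=2}, I do not expect any real obstacle; the one point worth spelling out is the elementary observation that the hypothesis $\mathrm{gcd}(\ell,\phi(N))=1$ collapses $\mathrm{ker}(\bar\varepsilon)$ onto $\mathrm{ker}(\varepsilon)$, which is what permits the more concrete subgroup $\mathrm{ker}(\varepsilon)$ to replace $\mathrm{ker}(\bar\varepsilon)$ throughout the statement.
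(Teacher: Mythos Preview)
Your proposal is correct and follows essentially the same route as the paper: the paper's proof simply says the corollary follows from Theorem~\ref{thm:mainfork=2} and Lemma~\ref{lem:teichmullerliftingforlandphicoprime}, and your elaboration spells out exactly this, namely that $\mathrm{gcd}(\ell,\phi(N))=1$ forces $\mathrm{ker}(\bar\varepsilon)=\mathrm{ker}(\varepsilon)$ so that the $H$ here coincides with the $H$ of Theorem~\ref{thm:mainfork=2}.
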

\begin{proof}
	It just follows from Theorem \ref{thm:mainfork=2} and Lemma \ref{lem:teichmullerliftingforlandphicoprime}.
\end{proof}

Then we have the following algorithm.

\begin{algorithm}  \label{alg:main}
	Let $\ell\ge 5$ be a prime number, $N>0$ an integer prime to $\ell$, and $k\ge2$. Let $f\in S_{k}(\varGamma_{1}(N), \varepsilon)$ be a normalized eigenform and $\lambda$ be a prime of $K_f$ lying over $\ell$. Suppose the representation $\rho_{f,\lambda}$ is irreducible.  Let $N'= N$ if $k=2$ and  $N' = N\ell$ if $k>2$.  
	 This algorithm outputs an integer $i$ with $0\le i \le \ell-1$, a normalized eigenform $f_2\in S_2(\varGamma_{H})$, and a prime $\lambda_2$ lying over $\ell$  in the field $K_{f_2}$, such that $\rho_{f,\lambda}$ is isomorphic to $\rho_{f_2,\lambda_2}\otimes \chi^{i}_{\ell}$. 
	 Here $H=\{x \ (\mathrm{mod} \ N')  \ | \ \mathrm{gcd}(x,N')=1 \ \mathrm{with} \ 0<x<N' \ \mathrm{and} \ \varepsilon(x) x^{k-2-2i} \equiv1\mod \lambda \}$  and $\varGamma_{H}=\varGamma_{H}(N')$.
	\begin{enumerate}[$1.$]
		\item    Set $i\leftarrow0$ if $k=2$ or $\ell\ge k-1$. Otherwise compute  $i$ by Algorithm \ref{alg:twistforsmalll}.
		\item Compute $M$ by Algorithm \ref{alg:twistforsmalll}.
		\item Set $M'\leftarrow N$ if $k=2$ and  $M' \leftarrow M\ell$ if $k>2$.  
		\item  Compute the set $S$ consisting of all the divisors of $M'$.
		\item  Take $M''$ in $S$ and do:
		\begin{enumerate}
			\item 	 Compute the group $H'=\{x \ (\mathrm{mod} \  M'') \ | \  \mathrm{gcd}(x,N'\ell)=1 \  \mathrm{with} \ 0<x<N'\ell \  \mathrm{and} \ \varepsilon(x) x^{k-2-2i} \equiv1\mod \lambda \}$.
			\item   Compute the congruence subgroup
			$\varGamma_{H'}(M'')$
			\item Compute $B=\frac{[SL_{2}(\mathbb{Z}):\varGamma_{1}(M'')]}{12}\cdot(\ell^2-1+k)$ and  $a_{p}(f)$ for all primes $p$ with  $p \le B$.
			\item   Compute all  newforms $F$ in  $S_{2}(\varGamma_{H'}(M''))$ using modular symbols. 
			\item   For each $f_{2}$ in $F$, do:
			\begin{enumerate}
				\item Compute  $p^{i}a_{p}(f_{2})$  for all primes $p$ with  $p\le B$ and compute primes $P$ of the composed field $K_{f}K_{g}$ lying over $\ell$. 
				\item 	         If there is a prime  $\mathfrak{l} \in P$ such that  $a_{p}(f) \equiv p^{i}a_{p}(f_2) \mod \mathfrak{l}$ for all primes $p$ with  $p    \le B$,  put $\lambda_{2}=\mathfrak{l}\cap K_{g}$ and then output $i$,  $f_{2}$, $\lambda_{2}$, and terminate. 
			\end{enumerate}
			\item Set $S\leftarrow S-\{M\}$ and go to step $5$.  
		\end{enumerate}
	\end{enumerate}
\end{algorithm}

\subsection{To realize modular Galois representations in Jacobians of the smallest possible dimensions}
	 Let $N>0$ and $k\ge2$ be integers. Let $f\in S_{k}(\varGamma_{1}(N),\varepsilon)$ be a normalized eigenform.  Let $\ell$ be a prime number with $\ell \nmid N$ and $\lambda$ be a prime of $K_f$ lying over $\ell$.  Let $N'= N$ if $k=2$ and  $N' = N\ell$ if $k>2$. Suppose the representation $\rho_{f,\lambda}$ is irreducible. Then by Algorithm \ref{alg:main}, we can obtain  an integer $i$ with $0\le i \le \ell-1$,  a normalized eigenform $f_2\in S_2(\varGamma_{H},\varepsilon_{2})$, and a prime $\lambda_2$ lying over $\ell$  in the field $K_{f_2}$, such that $\rho_{f,\lambda}$ is isomorphic to $\rho_{f_2,\lambda_2}\otimes \chi^{i}_{\ell}$. Here $H=\{x \ | \ \mathrm{gcd}(x,N')=1 \ \mathrm{with} \ 0<x<N' \ \mathrm{and} \ \varepsilon(x) x^{k-2-2i} \equiv1\mod \lambda \}$  and $\varGamma_{H}=\varGamma_{H}(N')$.  
	
Now we return to  the computations of $\rho_{f,\lambda}$.  From the discussion of Section \ref{sec:preliminary}, we know it suffices to compute the representation 
$$
\rho_{V_{f_{2}}}: Gal (\mathbb{\overline{Q}}/\mathbb{Q}) \rightarrow \mathrm{Aut}_{\mathbb{T}/\mathfrak{m}_{f_{2}}}(V_{f_{2}}),
$$
where $V_{f_{2}}=J_{1}(N')(\overline{\mathbb{Q}})[\mathfrak{m}_{f_{2}}]=\{x \in J_{1}(N')(\overline{\mathbb{Q}}) \ | \ tx=0 \ \mathrm{for}  \ \mathrm{all} \ t \ \mathrm{in} \ \mathfrak {m}_{f_2} \}$, since $\rho_{f_2}$ is isomorphic to $\rho_{V_{f_{2}}}$ or any simple constituent of $\rho_{V_{f_{2}}}$.

Let $X_{\varGamma_{H}}$ be the modular curve of the subgroup $\varGamma_{H}$ and denote $J_{\varGamma_{H}}$ its Jacobian.
By the Galois theory of function fields of modular curve, we know that the holomorphic differential space $\varOmega^{1}_{hol}(X_{\varGamma_{H}})$ is the $H$-invariant part of the space $\varOmega^{1}_{hol}(X_{1}(N'))$. By taking duals of the two spaces, we have 
\begin{displaymath}
J_{\varGamma_{H}}=\{x \in J_{1}(N')  \ | \ \sigma(x)=x, \mathrm{ \ for \ all \ } \sigma \in H \}.
\end{displaymath} 
Then we can show
\begin{theorem} \label{thm:jacobian}
 The torsion space $V_{f_2}$ is a $2$-dimensional subspace of $J_{\varGamma_{H}}[\ell]$. Therefore, the representation $\rho_{V_{f_{2}}}$ is a $2$-dimensional subrepresentation of $J_{\varGamma_{H}}[\ell]$.
\end{theorem}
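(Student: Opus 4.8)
The plan is to reduce the statement to three already-available ingredients and then assemble them. First I would observe that $\ell$ lies in $\mathfrak{m}_{f_2}$: the homomorphism $\mathcal{I}_{f_2}\colon\mathbb{T}\to\overline{\mathbb{F}}_\ell$ takes values in a field of characteristic $\ell$, so $\ell\in\ker(\mathcal{I}_{f_2})=\mathfrak{m}_{f_2}$, whence every point of $V_{f_2}=J_1(N')(\overline{\mathbb{Q}})[\mathfrak{m}_{f_2}]$ is killed by $\ell$, i.e.\ $V_{f_2}\subseteq J_1(N')[\ell]$. Second I would invoke the description $J_{\varGamma_H}=\{x\in J_1(N')\mid\sigma(x)=x\text{ for all }\sigma\in H\}$ established just above, in which $\sigma\in H$ acts through the diamond operator $\langle\sigma\rangle$ (because $H\cong\varGamma_H(N')/\varGamma_1(N')$ acts on $X_1(N')$, hence on holomorphic differentials and on $J_1(N')$, precisely via the diamond operators). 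So it suffices to show that each $\langle\sigma\rangle$ with $\sigma\in H$ fixes $V_{f_2}$ pointwise; granting this, $V_{f_2}\subseteq J_{\varGamma_H}$, and therefore $V_{f_2}\subseteq J_{\varGamma_H}\cap J_1(N')[\ell]=J_{\varGamma_H}[\ell]$.

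The heart of the argument is this triviality of the diamond action on $V_{f_2}$. Since $V_{f_2}$ is a $\mathbb{T}/\mathfrak{m}_{f_2}$-module, the operator $\langle\sigma\rangle\in\mathbb{T}$ acts on it by the scalar $\mathcal{I}_{f_2}(\langle\sigma\rangle)=\overline{\varepsilon}_2(\sigma)$, where $\varepsilon_2$ is the nebentypus of $f_2$. Now $\varepsilon_2$ was produced, in the proofs of Theorems \ref{thm:mainforM}--\ref{thm:largest} and in Algorithm \ref{alg:main}, as a Teichm\"uller lifting, so $\ker(\varepsilon_2)=\ker(\overline{\varepsilon}_2)$, and the computation of $H$ carried out there shows $H=\ker(\overline{\varepsilon}_2)$ exactly, as a subgroup of $(\mathbb{Z}/N'\mathbb{Z})^{\ast}$. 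Hence $\overline{\varepsilon}_2(\sigma)=1$ for every $\sigma\in H$, so $\langle\sigma\rangle$ is the identity on $V_{f_2}$, as needed.

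It remains to record the dimension and the Galois structure. Because $\rho_{f,\lambda}\cong\rho_{f_2,\lambda_2}\otimes\chi_\ell^i$ with $\rho_{f,\lambda}$ irreducible, $\rho_{f_2,\lambda_2}$ is irreducible as well; and $f_2$ has weight $2\ne\ell$ (as $\ell\ge5$), so the exceptional case $k=\ell$ for the failure of multiplicity one does not arise. By the multiplicity one results recalled in Section \ref{sec:preliminary}, $V_{f_2}$ is therefore $2$-dimensional over $\mathbb{T}/\mathfrak{m}_{f_2}$, so $\rho_{V_{f_2}}$ is $2$-dimensional. Finally, since $\varGamma_1(N')\subseteq\varGamma_H\subseteq\varGamma_0(N')$, the curve $X_{\varGamma_H}$ and its Jacobian $J_{\varGamma_H}$ have canonical models over $\mathbb{Q}$, so $J_{\varGamma_H}[\ell]$ carries a $Gal(\overline{\mathbb{Q}}/\mathbb{Q})$-action; and $V_{f_2}$ is Galois-stable in $J_1(N')$, being cut out by the $\mathbb{Q}$-rational operators of $\mathbb{T}$. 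Hence $V_{f_2}\hookrightarrow J_{\varGamma_H}[\ell]$ is Galois-equivariant and $\rho_{V_{f_2}}$ is a $2$-dimensional subrepresentation of $J_{\varGamma_H}[\ell]$.

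I do not expect a genuine obstacle: the proof is an assembly of facts set up in Section \ref{sec:preliminary} and in Theorems \ref{thm:mainforM}--\ref{thm:largest}. The one point needing care is the diamond-operator bookkeeping, namely confirming that the ``$\sigma$'' appearing in the $H$-fixed-locus description of $J_{\varGamma_H}$ is literally $\langle\sigma\rangle$ on $J_1(N')$ and that $\langle\sigma\rangle$ acts on the $\mathbb{T}/\mathfrak{m}_{f_2}$-module $V_{f_2}$ as the scalar $\overline{\varepsilon}_2(\sigma)$; after that, triviality on $H$ is immediate from $H=\ker(\overline{\varepsilon}_2)$.
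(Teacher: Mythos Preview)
Your proposal is correct and follows essentially the same approach as the paper: both arguments identify the action of $\sigma\in H$ on $J_1(N')$ with the diamond operator $\langle\sigma\rangle$, observe that $H=\ker(\overline{\varepsilon}_2)$ forces $\langle\sigma\rangle-1\in\mathfrak{m}_{f_2}$, and conclude $V_{f_2}\subseteq J_{\varGamma_H}[\ell]$. Your write-up is in fact more complete than the paper's, since you explicitly justify the $2$-dimensionality via multiplicity one (noting that $f_2$ has weight $2\ne\ell$) and spell out the Galois-equivariance, points the paper's own proof leaves implicit.
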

\begin{proof}
 Since $H$ is in fact the normal subgroup ker$(\varepsilon_{2})$ of $(\mathbb{Z}/(N')\mathbb{Z})^{\ast}$, it follows that the action of each $\sigma\in H$ on the $\ell$-torsion points $J_{1}(N')[\ell]$ of  $J_{1}(N')$ is the same as the action of a diamond operator $\langle d \rangle$ on $J_{1}(N')[\ell]$ for some $d\in (\mathbb{Z}/N'\mathbb{Z})^{\ast}$ with $\varepsilon_{2}(d) \equiv1\mod \lambda_{2}$. Let  $\mathfrak{m}_{f_{2}}$ be the kernel of the homomorphism $$\mathcal{I}_{f_2}: \mathbb{T} \rightarrow \mathbb{\overline F_{\ell}}, \ \  \ \ \langle d \rangle \mapsto \varepsilon(d), \ \  T_{n} \mapsto a_{n}(f) \mod \lambda_{2}.$$
 Then we have $\sigma-id$ is an element of $\mathfrak{m}_{f_{2}}$ and therefore we have $V_{f_{2}}\subseteq J_{\varGamma_{H}}[\ell]$. 
\end{proof}

By the argument at the end of Section \ref{sec:preliminary}, we know that the calculations can be largely decreased if we can realize the modular Galois representation $\rho_{V_{f_{2}}}$ in a Jacobian which has a smaller dimension. Theorem \ref{thm:jacobian} allows us to work with  $J_{\varGamma_{H}}$ instead of $J_{1}(N')$ to compute $\rho_{V_{f_{2}}}$. Since the dimension of $J_{\varGamma_{H}}$ is the same with the dimension of the $\mathbb{C}$-vector space $S_{2}(\varGamma_{H})$,  it follows from Theorem \ref{thm:largest} that the Jacobian $J_{\varGamma_{H}}$ found by our method has the smallest possible dimension, in the sense that $\varGamma_{H}$ is the largest possible congruence subgroup  with $\varGamma_{1}(N')\subseteq\varGamma_{H}\subseteq\varGamma_{0}(N')$ associated to the representation  $\rho_{f,\lambda}$.  

Given $f\in S_{12}(\varGamma_{1}(N))$, in Table $1$ to $5$,  we show the eigenforms $f_2$ produced by Algorithm \ref{alg:main} in the cases with $\ell$ up to $13$ and $N$ up to $6$. We also list the dimensions of $J_{1}(N')$ and  $J_{\varGamma_{H}}$  which are denoted by  $d_1$ and $d_{H}$, respectively.

\vspace{1cm}

        \makeatletter\def\@captype{table}\makeatother

	\caption{$N=1$}\label{eqtable}
	\renewcommand\arraystretch{1.5}
	\noindent\[
	\begin{array}{|c|c|c|c|c|c|c|c|}
	
	\multicolumn{6}{c}{f= q - 24q^2 + 252q^3 + O(q^4) $\ and \ $ K_f= \mathbb{Q}} \\

	\hline
	\ell & \lambda & \lambda_{2} & i  & f_2 & K_{f_2} & d_1 & d_{H} \\
	\hline
	11& (11) & (11) & 1  & q - 2q^2 - q^3  + O(q^4)  &  \mathbb{Q} &1&1\\
	\hline
	13 & (13) &(-4\alpha - 7) & 0 &  \begin{minipage}[t]{0.35\textwidth} $q + \alpha\cdot q^2 + (-2\alpha - 4)\cdot q^3 + O(q^4)$ \end{minipage} &  x^2 + 3x + 3 &2&2\\
	\hline
	
	\end{array}
	\]

\vspace{1cm}

        \makeatletter\def\@captype{table}\makeatother

	\caption{$N=3$}\label{eqtable}
	\renewcommand\arraystretch{1.5}
	\noindent\[
	\begin{array}{|c|c|c|c|c|c|c|c|}

	\multicolumn{6}{c}{f=q + 78q^2 - 243q^3  + O(q^4) $\ and \ $ K_f= \mathbb{Q}} \\

	\hline
	\ell & \lambda & \lambda_{2} & i  & f_2  & K_{f_2}  & d_1 & d_{H} \\
	
	\hline
	5 & (5) &(5) & 1  & q - q^2 - q^3  + O(q^4)  &  \mathbb{Q}  & 1 & 1 \\
	\hline
	7  & (7) & (\frac{3}{2}\alpha + 1)
	 & 0 & \begin{minipage}[t]{0.37\textwidth} $q + \alpha \cdot q^2 + (-\frac{1}{2}\alpha - 1)\cdot q^3 + O(q^4)$ \end{minipage} & x^2 + 2x + 4 & 5 & 3 \\
	\hline
	11& (11) & (11) & 0 & q + q^2 - q^3 + O(q^4)  &  \mathbb{Q}  & 21 & 3 \\
	\hline
	13 & (13) &(4\alpha + 1) & 0  & q - \alpha  \cdot q^3   + O(q^4) & x^2 + x + 1 & 33 & 17 \\
	\hline
	
	\end{array}
	\]

\newpage
        \makeatletter\def\@captype{table}\makeatother

	\caption{$N=4$}\label{eqtable}
	\renewcommand\arraystretch{1.5}
	\noindent\[
	\begin{array}{|c|c|c|c|c|c|c|c|}

\multicolumn{6}{c}{f= q - 516q^3  + O(q^4) $\ and \ $ K_f= \mathbb{Q}} \\

	\hline
	\ell & \lambda & \lambda_{2} & i  & f_2 & K_{f_2}  & d_1 & d_{H} \\
    \hline
	
	5  & (5) &(5) & 1  &  q - 2q^3 + O(q^4) &  \mathbb{Q} & 3 & 1 \\
	\hline
	7 & (7) & (\frac{3}{2}\alpha - 2)  & 0  &  q - \frac{1}{2}\alpha \cdot q^3 + O(q^4) & x^2 - 2x + 4 & 10 & 4 \\
	\hline
	11 & (11) & (11) & 0  & q + q^3 + O(q^4) &  \mathbb{Q} & 36 & 4 \\
	\hline
	13  & (13) & (2\alpha + 1) & 0 & q - \frac{1}{2}\alpha\cdot q^3  + O(q^4)& x^2 + 2x + 4 & 55 & 25 \\
	\hline
	
	\end{array}
	\]

\vspace{1.5cm}
        \makeatletter\def\@captype{table}\makeatother

	\caption{$N=5$}\label{eqtable}
	\renewcommand\arraystretch{1.5}
	\noindent\[
	\begin{array}{|c|c|c|c|c|c|c|c|}
	
		\multicolumn{6}{c}{f=q + a \cdot q^2 + (-\frac{1}{112}a^3 - \frac{725}{28}a) \cdot q^3 + O(q^4) $\ and $}\\ 
		
		\multicolumn{6}{c}{ K_f $ \ is \ the \ number \ field \ defined \ by \ $ x^4 + 4132x^2 + 2496256} \\
	
		\hline
	\ell & \lambda & \lambda_{2} & i & f_2 & K_{f_2}   & d_1 & d_{H} \\
	
	\hline
	7  &
	\begin{minipage}[t]{0.12\textwidth} $(7, \frac{1}{60}a^2 + \frac{494}{15})$ \end{minipage}  & \begin{minipage}[t]{0.19\textwidth} $(7, \frac{-a\cdot \alpha^3}{4} + (\frac{-a^2}{120} - \frac{217}{15})\cdot \alpha^2 + \frac{5a\cdot\alpha}{4} - \frac{a^2}{60} - \frac{524}{15})$  \end{minipage}
	 & 0 &  \begin{minipage}[t]{0.09\textwidth}
	$q + \alpha\cdot q^2 + (\alpha^3 - \alpha)\cdot q^3  + O(q^4)$
	\end{minipage} & \begin{minipage}[t]{0.12\textwidth} $x^4 - x^2 + 1$ \end{minipage} & 25 & 13 \\
	\hline
	11 &\begin{minipage}[t]{0.12\textwidth} $(11, \frac{1}{60}a^2 + \frac{434}{15})$ \end{minipage} &  \begin{minipage}[t]{0.22\textwidth}
	$(11, (\frac{a^3}{49280} - \frac{a^2}{120} - \frac{1711\cdot a}{12320} - \frac{1033}{60})\cdot\alpha^3 + (\frac{-a^2}{440} - \frac{599}{110})\cdot\alpha^2 + (\frac{-13a^3}{49280} - \frac{a^2}{24} - \frac{27037\cdot a}{12320})\cdot\alpha + \frac{a^3}{2240} + \frac{a^2}{220} + \frac{1369a}{560} + \frac{373}{110})
	$
	\end{minipage} & 0  & \begin{minipage}[t]{0.08\textwidth}
	$ q + \alpha \cdot q^2 + (-\frac{1}{2}\alpha^3 - \frac{7}{2}\alpha)\cdot q^3 + O(q^4)$
	\end{minipage}  &  x^4 + 7x^2 + 4 & 81 & 9 \\
	\hline
	13 &\begin{minipage}[t]{0.16\textwidth}
	$(13, -\frac{1}{3360}a^3 + \frac{1}{30}a^2 - \frac{809}{840}a + \frac{1048}{15})$
	\end{minipage}   &   \begin{minipage}[t]{0.21\textwidth}
	$(13, (\frac{13a^3}{26880} + \frac{10517a}{6720} - \frac{13}{8})\cdot \alpha^7 + (\frac{13}{5040a^3} + \frac{10517a}{1260} - \frac{26}{3})\cdot\alpha^5 + (\frac{247a^3}{20160} + \frac{199823a}{5040} - \frac{247}{6})\cdot\alpha^3 + \alpha^2 + (\frac{403a^3}{80640} + \frac{285707a}{20160} - \frac{355}{24})\cdot\alpha - \frac{a^2}{12} + \frac{5a}{4} - \frac{506}{3})$
	\end{minipage} & 0  & \begin{minipage}[t]{0.1\textwidth}
	$q + \alpha \cdot q^2 + (-\frac{13}{24}\alpha^7 - \frac{8}{3}\alpha^5 - \frac{38}{3}\alpha^3 - \frac{11}{8}\alpha)\cdot q^3  + O(q^4)$
	\end{minipage} & \begin{minipage}[t]{0.12\textwidth}
	$x^8 + 5x^6 + 24x^4 + 5x^2 + 1$
	\end{minipage} & 121 & 25 \\
	\hline
	
	\end{array}
	\]

\newpage
     \makeatletter\def\@captype{table}\makeatother

	\caption{$N=6$}\label{eqtable}
	\renewcommand\arraystretch{1.5}
	\noindent\[
	\begin{array}{|c|c|c|c|c|c|c|c|}
	
	\multicolumn{6}{c}{f= q - 32q^2 - 243q^3  + O(q^4)$\ and \ $ K_f= \mathbb{Q}} \\

	\hline
	\ell & \lambda & \lambda_{2} & i  & f_2 & K_{f_2}  & d_1 & d_{H} \\
	
	\hline
	5 & (5) &(2\alpha+1) & 0 & \begin{minipage}[t]{0.29\textwidth}  $q + \alpha \cdot q^2 - \alpha\cdot q^3 + O(q^4)$ \end{minipage} &  x^2+1  & 9 & 5 \\
	\hline
	7 & (7) & (3\alpha - 2)  & 4 &  \begin{minipage}[t]{0.29\textwidth}  $q + \alpha \cdot q^2 + (\alpha - 1)\cdot q^3  + O(q^4)$  \end{minipage} & x^2 - x + 1  & 25 & 13  \\
	\hline
	11& (11) & (11) & 0 & \begin{minipage}[t]{0.29\textwidth}  $q + q^2 - q^3  + O(q^4)$  \end{minipage} &  \mathbb{Q}  & 81 & 9 \\
	\hline
	13 & (13) & (-\alpha^3 - \alpha - 1) & 0 &  \begin{minipage}[t]{0.29\textwidth}  $q + \alpha\cdot q^2 + (1-\alpha^2)\cdot q^3  + O(q^4)$  \end{minipage}  & x^4 - x^2 + 1  & 121 & 61 \\
	\hline
	
	\end{array}
	\]

\vspace{0.5cm}

\section{Reduction to the cases of eigenforms on $\varGamma_{0}$}  \label{sec:gamma0}

In this section, we discuss the case that $k>2$ and $f\in S_{k}(\varGamma_{0}(N))$ is an eigenform on $\varGamma_{0}(N)$.

Now let $\phi(n)$ be the Euler's totient function. We first show the following lemma.

\begin{lemma}   \label{thm:proporderofkernel}
	Let $k\ge0$  and $m>0$ be integers, and $\ell$  a prime factor of $m$. Then the kernel of the homomorphism
	\begin{displaymath}
		\vartheta: \ (\mathbb{Z}/m\mathbb{Z})^{\ast}\rightarrow (\mathbb{Z}/\ell\mathbb{Z})^{\ast} , \ \ \ \ \ \ \
		x \bmod m  \ \rightarrow \ \ \ \  x^k\bmod \ell
	\end{displaymath} 
	has order $\frac{\phi(m)\cdot \mathrm{gcd}(\ell-1,k)}{\ell-1}$.
\end{lemma}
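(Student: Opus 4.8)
The plan is to factor $\vartheta$ as the composite of the reduction map modulo $\ell$ with the $k$-th power endomorphism of the cyclic group $(\mathbb{Z}/\ell\mathbb{Z})^{\ast}$, and then count kernels multiplicatively.

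First I would write $\vartheta = p_{k}\circ\pi_{m,\ell}$, where $\pi_{m,\ell}\colon(\mathbb{Z}/m\mathbb{Z})^{\ast}\to(\mathbb{Z}/\ell\mathbb{Z})^{\ast}$ is the canonical projection and $p_{k}\colon(\mathbb{Z}/\ell\mathbb{Z})^{\ast}\to(\mathbb{Z}/\ell\mathbb{Z})^{\ast}$ is the homomorphism $y\mapsto y^{k}$ (the composite is well defined since $\ell\mid m$). By Lemma \ref{lemmaofnd}, applied with $d=\ell$, the map $\pi_{m,\ell}$ is surjective, so its kernel has order $\#(\mathbb{Z}/m\mathbb{Z})^{\ast}/\#(\mathbb{Z}/\ell\mathbb{Z})^{\ast}=\phi(m)/(\ell-1)$.

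Next I would compute $\#\mathrm{ker}(p_{k})$. Since $(\mathbb{Z}/\ell\mathbb{Z})^{\ast}$ is cyclic of order $\ell-1$, choosing a generator identifies $p_{k}$ with multiplication by $k$ on $\mathbb{Z}/(\ell-1)\mathbb{Z}$; hence $\mathrm{ker}(p_{k})$ has exactly $\mathrm{gcd}(\ell-1,k)$ elements. This remains valid in the degenerate case $k=0$: then $p_{0}$ is trivial, its kernel is all of $(\mathbb{Z}/\ell\mathbb{Z})^{\ast}$, and $\mathrm{gcd}(\ell-1,0)=\ell-1$.

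Finally, since $\mathrm{ker}(\vartheta)=\pi_{m,\ell}^{-1}(\mathrm{ker}(p_{k}))$ and $\pi_{m,\ell}$ is a surjective group homomorphism, every fibre of $\pi_{m,\ell}$ over an element of $\mathrm{ker}(p_{k})$ has cardinality $\#\mathrm{ker}(\pi_{m,\ell})$, so
$$\#\mathrm{ker}(\vartheta)=\#\mathrm{ker}(\pi_{m,\ell})\cdot\#\mathrm{ker}(p_{k})=\frac{\phi(m)}{\ell-1}\cdot\mathrm{gcd}(\ell-1,k),$$
which is the asserted value. I do not expect any genuine obstacle here; the only points needing a word of care are the surjectivity of the reduction map (supplied by Lemma \ref{lemmaofnd}) and the handling of the boundary case $k=0$.
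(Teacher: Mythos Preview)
Your proof is correct and follows essentially the same approach as the paper: both factor $\vartheta$ through the surjective reduction $(\mathbb{Z}/m\mathbb{Z})^{\ast}\to(\mathbb{Z}/\ell\mathbb{Z})^{\ast}$ (via Lemma~\ref{lemmaofnd}) followed by the $k$-th power map on the cyclic group $(\mathbb{Z}/\ell\mathbb{Z})^{\ast}$. The only cosmetic difference is that the paper computes $\#\mathrm{Im}(\vartheta)$ and divides, whereas you compute the two kernel sizes and multiply; your explicit handling of $k=0$ is a nice touch the paper omits.
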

\begin{proof} 
	Since $\ell$ is a prime factor of $m$, the homomorphism $\vartheta$ factors as:
	\begin{displaymath} 
		\xymatrix{(\mathbb{Z}/m\mathbb{Z})^{\ast} \ar[rr]^{\vartheta}\ar[dr]_{\alpha}  &   & (\mathbb{Z}/\ell\mathbb{Z})^{\ast}  \\
			&  (\mathbb{Z}/\ell\mathbb{Z})^{\ast} \ar[ur]^{\beta} &
		}
	\end{displaymath}
	where $\alpha$ is the canonical homomorphism   
	\begin{displaymath}
		(\mathbb{Z}/m\mathbb{Z})^{\ast}\rightarrow (\mathbb{Z}/\ell\mathbb{Z})^{\ast} , \ \ \ \ \ \ \
		x \mod m  \ \rightarrow \ \ \ \  x\mod \ell,
	\end{displaymath} 	
	and $\beta$ is the homomorphism
	\begin{displaymath}
		(\mathbb{Z}/\ell\mathbb{Z})^{\ast} \rightarrow (\mathbb{Z}/\ell\mathbb{Z})^{\ast} , \ \ \ \ \ \ \
		x \mod \ell  \ \rightarrow \ \ \ \  x^k\mod \ell.
	\end{displaymath} 
	
	From Lemma \ref{lemmaofnd}, we know $\alpha$ is surjective and therefore the image  Im$(\vartheta)$ of $\vartheta$ is the same with the image  Im$(\beta)$ of $\beta$. Let $g$ be a generator of the cyclic group $(\mathbb{Z}/m\mathbb{Z})^{\ast}$ and then we know it has order $\ell-1$. It follows that Im$(\beta)=<g^k>$ has order $\frac{\ell-1}{\mathrm{gcd}(\ell-1,k)}$, which implies that the order of Im$(\vartheta)$ is also equal to $\frac{\ell-1}{\mathrm{gcd}(\ell-1,k)}$. 
	
	Since  $(\mathbb{Z}/m\mathbb{Z})^{\ast}/\mathrm{ker}
	(\vartheta)\cong \mathrm{Im}\vartheta$ and  $(\mathbb{Z}/m\mathbb{Z})^{\ast}$ has order $\phi(m)$, it follows that the kernel of $\vartheta$	has order $\frac{\phi(m)\cdot \mathrm{gcd}(\ell-1,k)}{\ell-1}$.
\end{proof}

Then we can show

\begin{theorem}  \label{thm:maingamma0}   
	Let $\ell\ge 5$ be a prime number, $N>0$ an integer prime to $\ell$, and $k>2$. Let $f\in S_{k}(\varGamma_{0}(N))$ be a normalized eigenform and $\lambda$ be a prime of $K_f$ lying over $\ell$. Suppose the representation $\rho_{f,\lambda}$ is irreducible. Let $i$ be the integer with $0\le i \le \ell-1$ and $\varGamma_{H}$ be the congruence subgroup  as given in Theorem \ref{thm:main}. Then the index $[\varGamma_{H}:\varGamma_1(N\ell)]$ of $\varGamma_{1}(N\ell)$  in $\varGamma_H$ is $\frac{\phi(N\ell)\cdot \mathrm{gcd}(\ell-1,k-2-2i)}{\ell-1}$. 
\end{theorem}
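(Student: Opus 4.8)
The plan is to identify $H$ explicitly as the kernel of a power map on $(\mathbb{Z}/N\ell\mathbb{Z})^{\ast}$ and then invoke Lemma \ref{thm:proporderofkernel}. The first step is to exploit the hypothesis $f\in S_{k}(\varGamma_{0}(N))$, which forces the nebentypus character $\varepsilon$ of $f$ to be trivial. Hence in the description of $H$ coming from Theorem \ref{thm:main} the factor $\varepsilon(x)$ drops out, and
$$H=\{x \ (\mathrm{mod} \ N\ell) \mid \mathrm{gcd}(x,N\ell)=1 \ \mathrm{and} \ x^{k-2-2i}\equiv 1 \bmod \lambda\}.$$
Since any such $x$ is a unit modulo $\ell$ and $\lambda$ lies over $\ell$, reduction modulo $\lambda$ sends $x$ into $\mathbb{F}_{\ell}\subseteq\overline{\mathbb{F}}_{\ell}$, so the condition $x^{k-2-2i}\equiv 1\bmod\lambda$ is equivalent to $x^{k-2-2i}\equiv 1\bmod\ell$. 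Therefore $H=\mathrm{ker}(\vartheta)$, where $\vartheta\colon(\mathbb{Z}/N\ell\mathbb{Z})^{\ast}\to(\mathbb{Z}/\ell\mathbb{Z})^{\ast}$ is given by $x\mapsto x^{k-2-2i}\bmod\ell$.

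Next I would convert the index into the order of $H$. By the notation fixed before this subsection, $\varphi_{N\ell}\colon\varGamma_{0}(N\ell)\twoheadrightarrow(\mathbb{Z}/N\ell\mathbb{Z})^{\ast}$ is surjective with kernel $\varGamma_{1}(N\ell)$, and $\varGamma_{H}=\varGamma_{H}(N\ell)=\varphi_{N\ell}^{-1}(H)$. Restricting $\varphi_{N\ell}$ to $\varGamma_{H}$ yields a surjection onto $H$ with kernel $\varGamma_{1}(N\ell)$, whence $[\varGamma_{H}:\varGamma_{1}(N\ell)]=\#H$.

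Finally I would apply Lemma \ref{thm:proporderofkernel}. The exponent $k-2-2i$ might be negative, but because $x$ runs over units modulo $\ell$ we may replace it by its residue $k'$ with $k'\equiv k-2-2i\pmod{\ell-1}$ and $0\le k'\le\ell-2$; this changes neither the map $\vartheta$ nor the value $\mathrm{gcd}(\ell-1,k-2-2i)=\mathrm{gcd}(\ell-1,k')$. Lemma \ref{thm:proporderofkernel} with $m=N\ell$ and exponent $k'$ then gives
$$\#H=\#\mathrm{ker}(\vartheta)=\frac{\phi(N\ell)\cdot\mathrm{gcd}(\ell-1,k')}{\ell-1}=\frac{\phi(N\ell)\cdot\mathrm{gcd}(\ell-1,k-2-2i)}{\ell-1},$$
which is the desired formula. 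The only points needing care are the two reductions — from $\lambda$ to $\ell$ in the congruence, and the normalization of the exponent modulo $\ell-1$ — and both are routine; all the arithmetic content is already packaged in Lemma \ref{thm:proporderofkernel}, so I do not expect a real obstacle here.
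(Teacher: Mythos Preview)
Your proposal is correct and follows essentially the same route as the paper: drop $\varepsilon$ using the $\varGamma_0(N)$ hypothesis, identify $H$ as the kernel of $\vartheta\colon(\mathbb{Z}/N\ell\mathbb{Z})^{\ast}\to(\mathbb{Z}/\ell\mathbb{Z})^{\ast}$, $x\mapsto x^{k-2-2i}$, use $\varphi_{N\ell}$ to get $[\varGamma_H:\varGamma_1(N\ell)]=\#H$, and apply Lemma~\ref{thm:proporderofkernel}. You are in fact slightly more careful than the paper in two places --- making explicit why the congruence modulo $\lambda$ reduces to one modulo $\ell$, and normalizing the exponent modulo $\ell-1$ so that the nonnegativity hypothesis of Lemma~\ref{thm:proporderofkernel} is met.
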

\begin{proof} 
	By Theorem \ref{thm:main}, there exists  a normalized eigenform $f_2\in S_2(\varGamma_{H},\varepsilon_{2})$, such that $\rho_{f,\ell}$ is isomorphic to $\rho_{f_2,\ell}\otimes \chi^{i}_{\ell}$. Here $H=\{x \  (\mathrm{mod}N\ell)   \ | \ \mathrm{gcd}(x,N\ell)=1 \ \mathrm{with} \ 0<x<N\ell \ \mathrm{and} \ \varepsilon(x) x^{k-2-2i} \equiv1\mod \lambda \}$  and $\varGamma_{H}=\varGamma_{H}(N\ell)$. 
     
     Since the nebentypus character of $f\in S_{k}(\varGamma_{0}(N))$ is trivial, it follows that $H=\{x \  (\mathrm{mod}N\ell)    \ | \  \mathrm{gcd}(x,N\ell)=1 \  \mathrm{with} \ 0<x<N\ell \  \mathrm{and} \  x^{k-2-2i} \equiv1\mod \ell \}$. Let $\vartheta$ be the homomorphism:
     	\begin{displaymath}
     \vartheta: \ (\mathbb{Z}/N\ell\mathbb{Z})^{\ast}\rightarrow (\mathbb{Z}/\ell\mathbb{Z})^{\ast} , \ \ \ \ \ \ \
     x \pmod{N\ell}  \ \rightarrow \ \ \ \  x^{k-2-2i}\pmod{\ell}.
     \end{displaymath} 
     Then it is evident that $H=\mathrm{ker}(\vartheta)$.
	 It follows from  Lemma \ref{thm:proporderofkernel} that $\#H= \frac{\phi(N\ell)\cdot \mathrm{gcd}(\ell-1,k-2-2i)}{\ell-1}$. 
	
	 Let $\varphi_{N\ell}$ denote the surjective homomorphism:
	\begin{displaymath}
	\varphi_{N\ell} :  \ \varGamma_{0}(N\ell) \twoheadrightarrow (\mathbb{Z}/N\ell\mathbb{Z})^{\ast} , \ \ \ \ \ \ \
	\begin{pmatrix} a & b\\ c & d \end{pmatrix} \rightarrow d \ (\mathrm{mod} \ N\ell).
	\end{displaymath}
	Then $\varGamma_{1}(N\ell)$ is the kernel of  $\varphi_{N\ell}$ and $\varGamma_{H}$ is the preimage $\varphi_{N\ell}^{-1}(H)$ of $H$ under $\varphi_{N\ell}$. It follows that $\varGamma_{H}/\varGamma_{1}(N\ell)\cong H$, and hence, the index $[\varGamma_{H}:\varGamma_1(N\ell)]=\# \ (\varGamma_{H}/\varGamma_{1}(N\ell))=\#H=\frac{\varphi(N\ell)\cdot \mathrm{gcd}(\ell-1,k-2-2i)}{\ell-1}$.
\end{proof}

If $f$ is an eigenform on $\varGamma_{0}(N)$, Theorem \ref{thm:maingamma0} implies the following corollary, which shows when the group $\varGamma_H$ must be $\varGamma_{0}(N\ell)$.

\begin{corollary}  \label{cor:gamma0ifandonlyifgamma0}
	Let $\ell\ge 5$ be a prime number, $N>0$ an integer prime to $\ell$, and $k>2$. Let $f\in S_{k}(\varGamma_{0}(N))$ be a normalized eigenform. Suppose the representation $\rho_{f,\ell}$ is irreducible. 	Let $i$ be the integer with $0\le i \le \ell-1$ and $\varGamma_{H}$ be the congruence subgroup  as given in Theorem \ref{thm:main}.  Then  $\varGamma_H=\varGamma_{0}(N\ell)$ if and only if $\ell-1 | k-2-2i$. 
\end{corollary}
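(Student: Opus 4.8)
The plan is to deduce the corollary directly from the index formula of Theorem~\ref{thm:maingamma0} by comparing it against the index $[\varGamma_0(N\ell):\varGamma_1(N\ell)]$. First I would recall that the surjection $\varphi_{N\ell}:\varGamma_0(N\ell)\twoheadrightarrow(\mathbb{Z}/N\ell\mathbb{Z})^{\ast}$ has kernel exactly $\varGamma_1(N\ell)$, so that $\varGamma_0(N\ell)/\varGamma_1(N\ell)\cong(\mathbb{Z}/N\ell\mathbb{Z})^{\ast}$ and hence $[\varGamma_0(N\ell):\varGamma_1(N\ell)]=\phi(N\ell)$. Since the congruence subgroup $\varGamma_H=\varGamma_H(N\ell)$ of Theorem~\ref{thm:main} satisfies $\varGamma_1(N\ell)\subseteq\varGamma_H\subseteq\varGamma_0(N\ell)$, all three groups are commensurable and we have the multiplicativity $[\varGamma_0(N\ell):\varGamma_1(N\ell)]=[\varGamma_0(N\ell):\varGamma_H]\cdot[\varGamma_H:\varGamma_1(N\ell)]$.

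Next I would invoke Theorem~\ref{thm:maingamma0}, which applies because $f\in S_k(\varGamma_0(N))$ with $k>2$ and $\rho_{f,\lambda}$ irreducible, to get
$$[\varGamma_H:\varGamma_1(N\ell)]=\frac{\phi(N\ell)\cdot\gcd(\ell-1,\,k-2-2i)}{\ell-1}.$$
Combining this with the previous paragraph gives $[\varGamma_0(N\ell):\varGamma_H]=\dfrac{\ell-1}{\gcd(\ell-1,\,k-2-2i)}$. Now $\varGamma_H=\varGamma_0(N\ell)$ holds if and only if this index equals $1$ (here I use that $\varGamma_H\subseteq\varGamma_0(N\ell)$, so equality of groups is equivalent to equality of orders), i.e.\ if and only if $\gcd(\ell-1,\,k-2-2i)=\ell-1$, which is precisely the condition $\ell-1\mid k-2-2i$.

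I do not anticipate a serious obstacle here: the statement is a clean arithmetic consequence of Theorem~\ref{thm:maingamma0}. The only points that need care are (i) checking that the hypotheses of Theorem~\ref{thm:maingamma0} are exactly those of the corollary, so that the index formula may be quoted verbatim, and (ii) justifying that $[\varGamma_0(N\ell):\varGamma_1(N\ell)]=\phi(N\ell)$ and that equality of finite indices over the common subgroup $\varGamma_1(N\ell)$ forces $\varGamma_H=\varGamma_0(N\ell)$; both are standard facts about congruence subgroups. One might alternatively phrase the ``only if'' direction without indices, by observing directly that $H=\mathrm{ker}(\vartheta)=(\mathbb{Z}/N\ell\mathbb{Z})^{\ast}$ exactly when $x^{k-2-2i}\equiv 1\pmod\ell$ for all $x$, i.e.\ when $\ell-1\mid k-2-2i$, and then noting $\varGamma_H(N\ell)=\varphi_{N\ell}^{-1}(H)=\varGamma_0(N\ell)$ iff $H$ is the full group; but routing through Theorem~\ref{thm:maingamma0} keeps the argument uniform with the rest of the section.
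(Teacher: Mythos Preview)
Your proposal is correct and follows essentially the same approach as the paper: both invoke Theorem~\ref{thm:maingamma0} for the index formula and then compare against $[\varGamma_0(N\ell):\varGamma_1(N\ell)]=\phi(N\ell)$. The only cosmetic difference is that the paper checks directly when $[\varGamma_H:\varGamma_1(N\ell)]=\phi(N\ell)$, whereas you pass to the complementary index $[\varGamma_0(N\ell):\varGamma_H]=(\ell-1)/\gcd(\ell-1,k-2-2i)$ and check when it equals $1$; these are the same argument.
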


\begin{proof}
	It follows from Theorem \ref{thm:maingamma0} that   $[\varGamma_H:\varGamma_1(N\ell)] = \frac{\phi(N\ell)\cdot \mathrm{gcd}(\ell-1,k-2-2i)}{\ell-1}$. Then $\varGamma_H=\varGamma_0(N\ell)$ if and only if $[\varGamma_H:\varGamma_1(N\ell)]=[\varGamma_{0}(N\ell):\varGamma_1(N\ell)]=\phi(N\ell)$, and hence if and only if $k-2-2i$ is divisible by $\ell-1$.
\end{proof}

    If we suppose $\ell\ge k-1$,  the integer $i$ as given in Corollary \ref{cor:gamma0ifandonlyifgamma0} can be taken to  be  $0$, and hence $\varGamma_H=\varGamma_{0}(N\ell)$ if and only $\ell=k-1$. Thus we can show 
\begin{corollary}  
		Let $\ell\ge 5$ be a prime number, $N>0$ an integer prime to $\ell$, and $k>2$. Let $f\in S_{k}(\varGamma_{0}(N))$ be a normalized eigenform.  Suppose $\ell\ge k-1$ and the representation $\rho_{f,\ell}$ is irreducible. Then there exists  a normalized eigenform $f_2\in S_2(\varGamma_{0}(N\ell))$  with $\rho_{f,\ell}\cong\rho_{f_2,\ell}$ if and only if $\ell=k-1$. 
\end{corollary}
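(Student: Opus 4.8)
The plan is to deduce this corollary from the maximality of $\varGamma_H$ established in Theorem \ref{thm:largest} together with the index computation in Corollary \ref{cor:gamma0ifandonlyifgamma0}, specialised to $i=0$. First I would note that since $\ell\ge k-1$, the twisting exponent $i$ supplied by Theorem \ref{thm:twist} (hence the one appearing in Theorems \ref{thm:main} and \ref{thm:largest}) may be taken to be $0$; this is precisely the setting of Corollary \ref{cor:mainforlargel}. Let $\varGamma_H=\varGamma_H(N\ell)$ be the congruence subgroup of that corollary. Because $f\in S_k(\varGamma_0(N))$ has trivial nebentypus, the associated group is $H=\{x\ (\mathrm{mod}\ N\ell)\mid \gcd(x,N\ell)=1,\ x^{k-2}\equiv 1\bmod\ell\}$, and by construction $\varGamma_1(N\ell)\subseteq\varGamma_H\subseteq\varGamma_0(N\ell)$.

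Next I would show that the existence of a normalized eigenform $f_2\in S_2(\varGamma_0(N\ell))$ with $\rho_{f,\ell}\cong\rho_{f_2,\ell}$ is equivalent to the equality $\varGamma_H=\varGamma_0(N\ell)$. For the ``if'' direction this is immediate: Corollary \ref{cor:mainforlargel} (equivalently the ``moreover'' clause of Theorem \ref{thm:largest} with $i=0$) produces such an $f_2$ in $S_2(\varGamma_H)$, and $\varGamma_H=\varGamma_0(N\ell)$ then places $f_2$ in $S_2(\varGamma_0(N\ell))$. For the ``only if'' direction, apply Theorem \ref{thm:largest} with $i=0$, $\varGamma=\varGamma_0(N\ell)$ and $g_2=f_2$; it yields $\varGamma_0(N\ell)\subseteq\varGamma_H$, while $\varGamma_H\subseteq\varGamma_0(N\ell)$ holds by definition, so $\varGamma_H=\varGamma_0(N\ell)$.

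Finally, by Corollary \ref{cor:gamma0ifandonlyifgamma0} applied with $i=0$, the condition $\varGamma_H=\varGamma_0(N\ell)$ is equivalent to $\ell-1\mid k-2$. Since $k>2$ gives $k-2\ge 1$ and $\ell\ge k-1$ gives $k-2\le\ell-1$, we have $1\le k-2\le\ell-1$, so $\ell-1\mid k-2$ forces $k-2=\ell-1$, i.e.\ $\ell=k-1$; the converse implication is trivial. Chaining this with the previous paragraph gives the claimed equivalence. I do not expect a genuine obstacle here: the argument is entirely bookkeeping on top of Theorem \ref{thm:largest} and the index formula of Theorem \ref{thm:maingamma0}. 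The only points needing a little care are justifying that $i$ may be taken to be $0$ when $\ell\ge k-1$ (so that the untwisted condition $\rho_{f,\ell}\cong\rho_{f_2,\ell}$ and the exponent $k-2$ in $H$ are the relevant ones), and observing that the numerical constraint $1\le k-2\le\ell-1$ upgrades the divisibility $\ell-1\mid k-2$ to the equality $\ell=k-1$.
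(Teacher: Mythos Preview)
Your proposal is correct and follows essentially the same approach as the paper: reduce to the equivalence ``$f_2$ exists $\Leftrightarrow \varGamma_H=\varGamma_0(N\ell)$'' via Theorem \ref{thm:largest} (with $i=0$ as in Corollary \ref{cor:mainforlargel}), then invoke Corollary \ref{cor:gamma0ifandonlyifgamma0}. Your write-up is simply more explicit than the paper's in spelling out both directions of the first equivalence and the arithmetic step $\ell-1\mid k-2\Leftrightarrow \ell=k-1$ under the constraint $1\le k-2\le \ell-1$.
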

\begin{proof}
	Let $\varGamma_{H}$ be the congruence subgroup as given in Corollary \ref{cor:mainforlargel}. By Theorem \ref{thm:largest}, we know that a normalized eigenform $f_2\in S_2(\varGamma_{0}(N\ell))$ with $\rho_{f,\ell}\cong\rho_{f_2,\ell}$ exists if and only if $\varGamma_{H}=\varGamma_{0}(N\ell)$. Since we can take $i$ to be $0$ in this case, this corollary just follows from  Corollary \ref{cor:gamma0ifandonlyifgamma0}.
\end{proof}

 For an eigenform $f\in S_{k}(\varGamma_{1}(N))$,  let $i$ be the integer with $0\le i \le \ell-1$ and $\varGamma_{H}$ be the congruence subgroup as  given in Theorem \ref{thm:main}. If suppose gcd$(\ell, \phi(N))=1$  and   $\ell-1 | k-2-2i$,
 we can show that the condition $\varGamma_H=\varGamma_{0}(N\ell)$ conversely implies that $f$ is an eigenform on $\varGamma_{0}(N)$. In fact,  in the following theorem, we will show that the form $f_2$  as given in Theorem \ref{thm:main} is a form on $\varGamma_{0}(N\ell)$ if and only if $f$ is a form on $\varGamma_{0}(N)$.
\begin{theorem}  \label{thm:gamma0iffgamma0}
	Let $\ell\ge 5$ be a prime number, $N>0$ an integer prime to $\ell$, and $k>2$. Let $f\in S_{k}(\varGamma_{1}(N))$ be a normalized eigenform. Suppose the representation $\rho_{f,\ell}$ is irreducible. Let $i$ be the integer with $0\le i \le \ell-1$ and $\varGamma_{H}$ be the congruence subgroup as  given in Theorem \ref{thm:main}. Suppose $\ell \nmid \phi(N)$ and   $\ell-1 | k-2-2i$. Then  $\varGamma_H=\varGamma_{0}(N\ell)$ if and only if $f\in S_{k}(\varGamma_{0}(N))$. 
\end{theorem}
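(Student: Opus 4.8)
The plan is to recast the identity $\varGamma_H=\varGamma_0(N\ell)$ as a statement about a single Dirichlet character and then feed it into the Teichm\"uller lifting results of Section~\ref{sec:character}.

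First I would make the reduction. Since $\varGamma_H=\varGamma_H(N\ell)=\varphi_{N\ell}^{-1}(H)$ and $\varphi_{N\ell}\colon \varGamma_0(N\ell)\twoheadrightarrow (\mathbb{Z}/N\ell\mathbb{Z})^{\ast}$ is surjective with kernel $\varGamma_1(N\ell)$, we have $\varGamma_H\subseteq\varGamma_0(N\ell)$, with equality precisely when $H=(\mathbb{Z}/N\ell\mathbb{Z})^{\ast}$. By the description of $H$ in Theorem~\ref{thm:main} (compare~(\ref{inducedkernal}) and the computation in the proof of Theorem~\ref{thm:largest}), the subgroup $H$ is exactly the kernel of the character $\bar\varepsilon\,\chi_\ell^{\,k-2-2i}$ of $(\mathbb{Z}/N\ell\mathbb{Z})^{\ast}$, where $\bar\varepsilon$ is the reduction modulo $\lambda$ of the character modulo $N\ell$ induced by $\varepsilon$ and $\chi_\ell$ is the mod $\ell$ cyclotomic character viewed as a character of $(\mathbb{Z}/N\ell\mathbb{Z})^{\ast}$. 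Thus $\varGamma_H=\varGamma_0(N\ell)$ if and only if $\bar\varepsilon\,\chi_\ell^{\,k-2-2i}$ is the trivial character.

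Next I would split this condition through the Chinese remainder isomorphism $(\mathbb{Z}/N\ell\mathbb{Z})^{\ast}\cong (\mathbb{Z}/N\mathbb{Z})^{\ast}\times(\mathbb{Z}/\ell\mathbb{Z})^{\ast}$, which is available because $\mathrm{gcd}(N,\ell)=1$. Since $\varepsilon$ has modulus $N$, the character $\bar\varepsilon$ factors through the first projection, restricting to the reduction of $\varepsilon$ modulo $\lambda$ on $(\mathbb{Z}/N\mathbb{Z})^{\ast}$ and being trivial on the $(\mathbb{Z}/\ell\mathbb{Z})^{\ast}$-factor; dually $\chi_\ell$ factors through the second projection, is trivial on the $(\mathbb{Z}/N\mathbb{Z})^{\ast}$-factor, and restricts on $(\mathbb{Z}/\ell\mathbb{Z})^{\ast}$ to the canonical isomorphism onto $\mathbb{F}_\ell^{\ast}$. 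Hence $\bar\varepsilon\,\chi_\ell^{\,k-2-2i}$ is trivial if and only if $\bar\varepsilon$ is trivial \emph{and} $\ell-1\mid k-2-2i$. Invoking the standing hypothesis $\ell-1\mid k-2-2i$, this leaves $\varGamma_H=\varGamma_0(N\ell)$ if and only if $\bar\varepsilon$ is trivial.

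Finally I would remove the bar: since $\ell\nmid\phi(N)$, Lemma~\ref{lem:teichmullerliftingforlandphicoprime} gives $\mathrm{ker}(\bar\varepsilon)=\mathrm{ker}(\varepsilon)$, so $\bar\varepsilon$ is trivial exactly when $\varepsilon$ is, and a nonzero form in $S_k(\varGamma_1(N),\varepsilon)$ lies in $S_k(\varGamma_0(N))$ exactly when $\varepsilon$ is trivial. Chaining these equivalences yields $\varGamma_H=\varGamma_0(N\ell)$ if and only if $f\in S_k(\varGamma_0(N))$; the implication starting from $f\in S_k(\varGamma_0(N))$ can alternatively be read off directly from Corollary~\ref{cor:gamma0ifandonlyifgamma0}. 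I do not expect any substantive obstacle here: the only point requiring care is the bookkeeping of moduli — keeping straight over which modulus each Dirichlet character lives and how the induction map $\pi_{N\ell,N}$ interacts with the Chinese remainder splitting — and one should note that the hypothesis $\ell\nmid\phi(N)$ is used solely for the implication $\varGamma_H=\varGamma_0(N\ell)\Rightarrow f\in S_k(\varGamma_0(N))$, while $\ell-1\mid k-2-2i$ is precisely what makes the converse hold.
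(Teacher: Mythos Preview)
Your proof is correct and follows essentially the same route as the paper: both arguments identify $\varGamma_H=\varGamma_0(N\ell)$ with the triviality of $\bar\varepsilon\,\chi_\ell^{k-2-2i}$, kill the $\chi_\ell$-factor using $\ell-1\mid k-2-2i$, and then lift $\bar\varepsilon$ trivial to $\varepsilon$ trivial via $\ell\nmid\phi(N)$. The only cosmetic differences are that the paper passes through the auxiliary form $f_2$ and its character $\varepsilon_2$ (from Theorem~\ref{thm:main}) and uses Lemma~\ref{lem:distinctrootsofunity} directly rather than Lemma~\ref{lem:teichmullerliftingforlandphicoprime}, and it handles the modulus change with the surjection $\pi_{N\ell,N}$ where you use the Chinese remainder splitting.
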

\begin{proof}
	The sufficiency follows from the sufficiency of Corollary \ref{cor:gamma0ifandonlyifgamma0}. Now we prove the necessity.
	
	By Theorem \ref{thm:main}, there exists  a normalized eigenform $f_2\in S_2(\varGamma_{H},\varepsilon_{2})$, such that $\rho_{f,\ell}$ is isomorphic to $\rho_{f_2,\ell}\otimes \chi^{i}_{\ell}$. Let $\varepsilon $  be the  nebentypus characters of $f$. Then we have
	\begin{equation} \label{congruenceofgammaoiffgamma0}
		\bar\varepsilon_{2} \equiv \bar\varepsilon_{ind}\cdot \chi_{\ell}^{k-2-2i} \mod v,
	\end{equation}
	where 	$\varepsilon_{ind}$ is the mod $N\ell$ character induced by $\varepsilon$.
	
	Suppose  $\varGamma_H=\varGamma_{0}(N\ell)$ and then $\varepsilon_{2}$ is a trivial character. We also have  $\ell-1 | k-2-2i$, and it implies that the  congruence (\ref{congruenceofgammaoiffgamma0}) reduces to 
	\begin{displaymath} 
		\bar\varepsilon_{ind} \equiv 1 \mod v.
	\end{displaymath}
	Since $\varepsilon_{ind}=\varepsilon \circ \pi_{N\ell,N}$ and $\pi_{N\ell,N}$ is surjective by Lemma \ref{lemmaofnd}, we therefore have 
		\begin{displaymath} 
	\bar\varepsilon \equiv 1 \mod v.
	\end{displaymath}
	Since $\varepsilon$ is a Dirichlet character of $(\mathbb{Z}/N \mathbb{Z})^{\ast}$, each element of its image is a $\phi(N)$-th root of unity. We have $\ell \nmid \phi(N)$, and  it follows from Lemma \ref{lem:distinctrootsofunity} that the image of $\varepsilon$ does not contain any other  $\phi(N)$-th root of unity except $1$.  Hence $\varepsilon$ is the trivial character and this shows $f\in S_{k}(\varGamma_{0}(N))$.
\end{proof}

 If we suppose $\ell\ge k-1$, Theorem \ref{thm:gamma0iffgamma0} is reduced to the following corollary.
\begin{corollary}
	Let $\ell\ge 5$ be a prime number, $N>0$ an integer prime to $\ell$, and $k>2$. Let $f\in S_{\ell+1}(\varGamma_{1}(N))$ be a normalized eigenform.  Suppose $\ell \nmid \phi(N)$ and the representation $\rho_{f,\ell}$ is irreducible. Then there exists  a normalized eigenform $f_2\in S_2(\varGamma_{0}(N\ell))$  with $\rho_{f,\ell}\cong\rho_{f_2,\ell}$ if and only if  $f\in S_{\ell+1}(\varGamma_{0}(N))$. 
\end{corollary}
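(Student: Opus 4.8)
The plan is to obtain this corollary by specializing Theorem~\ref{thm:gamma0iffgamma0} to the case $k=\ell+1$ and by reading the condition ``$\varGamma_H=\varGamma_0(N\ell)$'' in terms of the existence of a weight~$2$ companion form on $\varGamma_0(N\ell)$.

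First I would check that the hypotheses of Theorem~\ref{thm:gamma0iffgamma0} are met for $f$. Since $k=\ell+1$ we have $\ell\ge k-1$, so, as in the proof of Corollary~\ref{cor:mainforlargel}, the integer $i$ of Theorem~\ref{thm:main} may be taken equal to $0$; then $k-2-2i=\ell-1$, so $\ell-1\mid k-2-2i$ holds trivially. The hypothesis $\ell\nmid\phi(N)$ is assumed, $\rho_{f,\ell}$ is irreducible by assumption, and $k=\ell+1\ge 6>2$ since $\ell\ge5$. Write $\varGamma_H=\varGamma_H(N\ell)$ for the congruence subgroup attached to $f$ and $i=0$ via Theorem~\ref{thm:main}.

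Next I would establish the equivalence: a normalized eigenform $f_2\in S_2(\varGamma_0(N\ell))$ with $\rho_{f,\ell}\cong\rho_{f_2,\ell}$ exists if and only if $\varGamma_H=\varGamma_0(N\ell)$. For the ``if'' direction, the ``moreover'' clause of Theorem~\ref{thm:largest} (equivalently Theorem~\ref{thm:main}) yields a normalized eigenform $f_2\in S_2(\varGamma_H)$ with $\rho_{f,\ell}\cong\rho_{f_2,\ell}\otimes\chi_{\ell}^{i}$; since $i=0$ this reads $\rho_{f,\ell}\cong\rho_{f_2,\ell}$, and by assumption $\varGamma_H=\varGamma_0(N\ell)$, so $f_2\in S_2(\varGamma_0(N\ell))$. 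For the ``only if'' direction, suppose some normalized eigenform $g_2\in S_2(\varGamma_0(N\ell))$ satisfies $\rho_{f,\ell}\cong\rho_{g_2,\ell}$; then Theorem~\ref{thm:largest}, applied with the congruence subgroup $\varGamma=\varGamma_0(N\ell)$ and this value of $i$, gives $\varGamma_0(N\ell)=\varGamma\subseteq\varGamma_H\subseteq\varGamma_0(N\ell)$, hence $\varGamma_H=\varGamma_0(N\ell)$.

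Finally I would invoke Theorem~\ref{thm:gamma0iffgamma0}, which under the verified hypotheses states that $\varGamma_H=\varGamma_0(N\ell)$ if and only if $f\in S_k(\varGamma_0(N))=S_{\ell+1}(\varGamma_0(N))$; combined with the equivalence of the previous paragraph this is precisely the asserted statement. I do not anticipate a real obstacle: all of the content lies in reducing to $i=0$ (so that the twist by $\chi_{\ell}^{i}$ disappears and the divisibility $\ell-1\mid k-2-2i$ becomes automatic) and then chaining Theorems~\ref{thm:largest} and~\ref{thm:gamma0iffgamma0}. The one point that deserves a sentence of care is that the eigenform produced by Theorem~\ref{thm:main}/\ref{thm:largest} a priori lives only on $\varGamma_H$, so the passage to $\varGamma_0(N\ell)$ genuinely requires the equality $\varGamma_H=\varGamma_0(N\ell)$ rather than a mere inclusion.
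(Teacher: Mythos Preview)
Your proposal is correct and follows essentially the same route as the paper: set $k=\ell+1$, take $i=0$ so that $\ell-1\mid k-2-2i$ automatically, translate the existence of $f_2\in S_2(\varGamma_0(N\ell))$ into the equality $\varGamma_H=\varGamma_0(N\ell)$ via Theorem~\ref{thm:largest}, and conclude by Theorem~\ref{thm:gamma0iffgamma0}. The only difference is that you spell out both directions of the intermediate equivalence more explicitly than the paper does.
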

\begin{proof}
   Let $k=\ell+1$ denote the weight of $f$. Then we have  $\ell\ge k-1$ and $\ell-1 | k-2$.
    Let $i$ be the integer with $0\le i \le \ell-1$ and $\varGamma_{H}$ be the congruence subgroup as  given in Theorem \ref{thm:main}. Then we can take $i$  to  be $0$.
     It follows that   a normalized eigenform $f_2\in S_2(\varGamma_{0}(N\ell))$ with $\rho_{f,\ell}\cong\rho_{f_2,\ell}$ exists if and only if $\varGamma_{H}=\varGamma_{0}(N\ell)$. Then this corollary follows from Theorem \ref{thm:gamma0iffgamma0}.
\end{proof}

\bibliographystyle{amsplain}

\end{document}